\crefname{hypothesis}{Hypothesis}{Hypotheses}
\newtheorem{assumption}{Assumption}%
\title{$\mathcal{H}_2$ optimal rational approximation on general domains}
\author{Alessandro Borghi\thanks{Institut für Mathematik, Technische Universit\"at Berlin, 10623 Berlin, Germany 
  (\email{borghi@tu-berlin.de}, \url{https://www.tu.berlin/fgmso/alessandro-borghi}).}
\and Tobias Breiten\thanks{Institut für Mathematik, Technische Universit\"at Berlin, 10623 Berlin, Germany 
  (\email{tobias.breiten@tu-berlin.de}, \url{https://www.tu.berlin/fgmso/tobias-breiten})}
}
\newcommand*{\addFileDependency}[1]{
  \typeout{(#1)}
  \@addtofilelist{#1}
  \IfFileExists{#1}{}{\typeout{No file #1.}}
}
\newcommand\myatop[2]{\genfrac{}{}{0pt}{}{#1}{#2}}
\begin{document}
\nolinenumbers

\maketitle

\begin{abstract}
  Optimal model reduction for large-scale linear dynamical systems is studied. In contrast to most existing works, the systems under consideration are not required to be stable, neither in discrete nor in continuous time. As a consequence, the underlying rational transfer functions are allowed to have poles in general domains in the complex plane. In particular, this covers the case of specific conservative partial differential equations such as the linear Schrödinger and the undamped linear wave equation with spectra on the imaginary axis. 
    By an appropriate modification of the classical continuous time Hardy space $\mathcal{H}_2$, a new $\mathcal{H}_2$ like optimal model reduction problem is introduced and first order optimality conditions are derived. As in the classical $\mathcal{H}_2$ case, these conditions exhibit a rational Hermite interpolation structure for which an iterative model reduction algorithm is proposed. Numerical examples demonstrate the effectiveness of the new method.
\end{abstract}

\begin{keywords}
  rational interpolation, model reduction, conformal maps, Hardy spaces
\end{keywords}

\begin{AMS}
  34C20, 41A20, 93A15, 93C05
\end{AMS}

\section{Introduction} \label{sec:1}

We consider large-scale single input single output (SISO) linear time invariant (LTI) dynamical systems of the form
\begin{equation}\label{eq:fom}
	\begin{cases}
		\dot{\mathbf{x}}(t)=\mathbf{A}\mathbf{x}(t)+\mathbf{b}u(t),\quad \mathbf{x}(0)=0,\\
		y(t) = \mathbf{c}^* \mathbf{x}(t),    \end{cases}
\end{equation}
where $\mathbf{A}\in\mathbb{C}^{n\times n}$ and $\mathbf{c},\mathbf{b}\in\mathbb{C}^{n}$. Here, for fixed time $t$, $\mathbf{x}(t)\in\mathbb{C}^{n}$, $u(t)\in\mathbb{C}$, and $y(t)\in\mathbb{C}$ are the state, input, and output of the system, respectively. Let us emphasize that the results in this article similarly hold true for systems with multiple inputs and outputs and the restriction to the SISO case is made for the ease of presentation. Accompanied by the time domain description \eqref{eq:fom} we have the (frequency domain) transfer function 
\begin{equation*}
	H(s)=\mathbf{c}^*(s\mathbf{I} - \mathbf{A})^{-1} \mathbf{b}.
\end{equation*}
If \eqref{eq:fom} is minimal, then $H$ is a rational function of degree $n$. As an accurate modeling of such systems in both time and frequency domain can be computationally expensive for large values of $n$, we are interested in the construction of a reduced order surrogate model of the form     
\begin{equation} \label{eq:rom}
	\begin{cases}
		\dot{\widehat{\mathbf{x}}}_r(t)=\widehat{\mathbf{A}}_r\widehat{\mathbf{x}}_r(t)+\widehat{\mathbf{b}}_ru(t),\quad \widehat{\mathbf{x}}_r(0)=0,\\
		\widehat{y}_r(t) = \widehat{\mathbf{c}}_r^* \widehat{\mathbf{x}}_r(t),    \end{cases}
\end{equation}
with transfer function 
\begin{equation*}
	\widehat{H}(s)=\widehat{\mathbf{c}}_r^*(s\mathbf{I} - \widehat{\mathbf{A}}_r)^{-1} \widehat{\mathbf{b}}_r,
\end{equation*}
where $\widehat{\mathbf{A}}_r\in\mathbb{C}^{r\times r}$ and $\widehat{\mathbf{c}}_r,\widehat{\mathbf{b}}_r\in\mathbb{C}^{r}$. Here, the goal for constructing a ``good'' reduced order model is twofold: on the one hand, we are interested in \eqref{eq:rom} being efficiently solvable such that we demand $r\ll n$; on the other hand, the reduced model is expected to yield an accurate approximation such that the outputs of both full and reduced system are close to each other, i.e., $\widehat{y}_r(t)\approx y(t)$ for  $t\ge 0$. The latter condition requires a more precise notion of similarity. For example, it is well known, see, e.g., \cite{antoulas2005} that 
\begin{align}\label{eq:linf_l2}
  \sup\limits_{t\ge 0} \,  \lvert y(t)-\widehat{y}_r(t)\lvert
  \le \| H-\widehat{H}\|_{\mathcal{H}_2} \|u\|_{\mathcal{L}_2}
\end{align}
which has led to the study of $\mathcal{H}_2$ optimal model reduction problems, see \cite{gugercin2008,meier1967}. More generally, model reduction techniques for linear systems of the form \eqref{eq:fom} have been addressed from a multitude of different areas such as system theory \cite{mullis1976,moore1981,meier1967}, reduced basis methods \cite{barrault2004,rozza2008}, rational interpolation \cite{grimme97,freund2003,mayo2007}, proper orthogonal decomposition \cite{sirovich1987,kunish2001} and, more recently, data driven techniques \cite{brunton2016,kutz2016,peherstorfer2016}. While a complete overview of the existing literature is out of the scope of this article, let us refer to the monographs \cite{antoulas2005,hesthaven2015,benner2017,benner2005} and the references therein.     

The results discussed throughout this article are related to the error bound in \eqref{eq:linf_l2} and the corresponding $\mathcal{H}_2$ approximation problem of the underlying transfer functions. First order  optimality conditions for $\mathcal{H}_2$ model reduction of linear systems have already been derived in \cite{meier1967,wilson1970}. Later on, in \cite{gugercin2008} the iterative rational Krylov algorithm (IRKA) has been proposed to numerically compute $\mathcal{H}_2$ optimal reduced order models, see also \cite{vandooren2008,vandooren2010}. A similar method, called MIRIAm, has been discussed in a discrete time setting in \cite{gerstner2010}. Several extensions for frequency-weighted \cite{anic2013,breiten2015}, frequency-limited \cite{petersson2014}, structure-preserving \cite{gugercin2009}, parametric \cite{baur2011} or data-driven \cite{beattie2012} $\mathcal{H}_2$ problems have been developed over the last years. One of the essential theoretical assumptions made in these works is that the full order model \eqref{eq:fom} is (asymptotically) stable. For unstable systems, available methods are rather scarce. Some notable exceptions are the approaches discussed in \cite{magrauder2010,sinani2019}. Moreover, let us particularly mention the $h_{2,\alpha}$ model reduction technique proposed in \cite{kubalinska2008} which allows to treat discrete time systems with system poles outside of the unit circle by extending the classical (discrete time) Hardy space to a circle of radius $\alpha$. The strategy we follow here is based on similar ideas and also relates to the recent more general $\mathcal{L}_2$ optimal model reduction framework from \cite{mlinaric2022,mlinaric2023}. We build upon the existing theory of optimal $\mathcal{H}_2$ model reduction and extend it by an appropriate use of specific conformal maps. 
The main contributions are the following:
\begin{enumerate}[(i)]
	\item We consider rational transfer functions with poles in specific domains in $\mathbb C$, thereby covering typical cases such as the open left half plane and the open unit disk. For this purpose, we define the $\mathcal{H}_2(\bar{\mathbb{A}}^{\mathsf{c}})$ space in Definition \ref{def:H2A}.
	\item We study the resulting  $\mathcal{H}_2(\bar{\mathbb{A}}^{\mathsf{c}})$ optimal model reduction problem and derive structured first order necessary interpolation optimality conditions in Theorem \ref{theorem:main}.
	\item Under additional assumptions, we show in Corollary \ref{corollary:practicaltheorem} that more explicit interpolation conditions can be obtained for which we propose a numerical algorithm by a modification of IRKA, see Algorithm \ref{algoirka}.
	\item The proposed algorithm is shown to be applicable to the Schr\"odinger and the undamped wave equation where the system poles are positioned along the imaginary axis.\\
\end{enumerate}

The rest of the paper is organized as follows. In Section \ref{sec1} we briefly recall the concept of interpolatory model order reduction and existing interpolation-based $\mathcal{H}_2$ optimality conditions. In Section \ref{sec:H2Abar} we introduce a Hardy space for functions with poles in general domains. We refer to this space as  $\mathcal{H}_2(\bar{\mathbb{A}}^{\mathsf{c}})$. We discuss its connection to the classical $\mathcal{H}_2$ space and characterize the arising inner products.  
In Section \ref{sec:optconditions} we introduce an $\mathcal{H}_2(\bar{\mathbb{A}}^{\mathsf{c}})$ optimal model reduction problem for which we derive first order necessary interpolation conditions for local optimality. In addition, under specific assumptions, we propose a method based on IRKA for computing a solution to the model reduction problem. In Section \ref{sec:numexp} we demonstrate the effectiveness of our approach through numerical experiments for two spatially discretized partial differential equations with spectra residing along the imaginary axis.

\subsection{Notation}\label{sec:notation}

By $\mathbb{C}$, $\mathbb{C}_+$, $\mathbb{C}_-$ and $\mathbb{D}$ we denote the complex plane, the open right half and open left half complex plane, and the open unit disk, respectively.
For an open subset $\mathbb{X}\subset\mathbb{C}$, we denote $\delta\mathbb{X}$ to be its boundary, $\mathbb{X}^{\mathsf{c}}$ its complement, $\bar{\mathbb{X}}$ its closure, and $\bar{\mathbb{X}}^{\mathsf{c}}=\mathbb{C}\backslash\left\{\mathbb{X}\cup\delta\mathbb{X}\right\}$ its exterior. We denote the complex conjugation of a scalar and the Hermitian of a matrix by $(\cdot)^*$. We denote the Euclidean norm by $\|\cdot\|_2$ and the absolute value over the complex numbers by $\lvert x\lvert=\sqrt{xx^*}$ where $x\in\mathbb{C}$. The symbol $\mathrm{i}$ denotes the imaginary unit. Let $\mathbf{X}\in\mathbb{C}^{n\times n}$ be a matrix, then $\text{Ran}(\mathbf{X})$ is its range. The first and second derivative of the function $f$ at a point $x$, i.e., $\left[\frac{\mathrm{d}}{\mathrm{d}s}f(s)\right]_{s=x}$ and $\left[\frac{\mathrm{d}^2}{\mathrm{d}s^2}f(s)\right]_{s=x}$ , are denoted by $f'(x)$ and $f''(x)$, respectively. For two complex valued functions, $f$ and $g$, we denote their composition by $f\circ g$ and its evaluation at $x$ by $f(g(x))$. The inverse of a function $f$ is denoted by $f^{-1}$. Let $f$ be analytic with a Taylor series around $x_0$ equal to $f(x)=\sum_{j=0}^\infty a_j(x-x_0)^j$. Then $\overline{f}(x)=\sum_{j=0}^\infty a_j^*(x-x_0^*)^j=f(x^*)^*$. In other words, $\overline{f}$ is $f$ but with its coefficients replaced by their complex conjugates.
For a rational function $F$ with poles $\{\lambda_j\}_{j=1}^n\in \mathbb{X}$, we denote the residue of $ F$ in $\lambda$ by $\mathsf{res}[ F(s),\lambda]$. If $\lambda$ is a simple pole, then $\mathsf{res}[ F(s),\lambda]=\lim\limits_{s\rightarrow\lambda}(s-\lambda) F(s)$, if $\lambda$ is a double pole, then $\mathsf{res}[ F(s),\lambda]=\lim\limits_{s\rightarrow\lambda}\left[\frac{\mathrm{d}}{\mathrm{d}x}(x-\lambda)^2 F(x)\right]_{x=s}$.  

\section{Interpolatory model order reduction}\label{sec1}
In this section, we briefly recall the concept of interpolatory model reduction. In particular, we summarize the problem of $\mathcal{H}_2$ optimal model reduction and how it relates to rational (Hermite) interpolation as it lays the foundations for our main results in Section \ref{sec:H2Abar}.
For the construction of the reduced model in \eqref{eq:rom}, we consider a Petrov-Galerkin projection. In other words, given  two matrices $\mathbf{V}_r,\mathbf{W}_r\in\mathbb{C}^{n\times r}$, we consider an approximation of the form $\mathbf{x}(t)\approx\mathbf{V}_r\widehat{\mathbf{x}}_r(t)$ such that the residual for \eqref{eq:fom} satisfies the following orthogonality condition
\[
\mathrm{Ran}(\mathbf{W}_r) \perp \left(\mathbf{V}_r\dot{\widehat{\mathbf{x}}}_r(t)-\mathbf{A}\mathbf{V}_r\widehat{\mathbf{x}}_r(t)-\mathbf{b}u(t)\right).
\]
If $\mathbf{W}_r^*\mathbf{V}_r$ is invertible, this leads to the reduced order system matrices
\begin{equation}\label{eq:systemmatrices}
	\widehat{\mathbf{A}}_r = (\mathbf{W}_r^*\mathbf{V}_r)^{-1}\mathbf{W}_r^*\mathbf{A}\mathbf{V}_r,\; \widehat{\mathbf{b}}_r = (\mathbf{W}_r^*\mathbf{V}_r)^{-1}\mathbf{W}_r^*\mathbf{b},\; \textnormal{and } \widehat{\mathbf{c}}_r^* = \mathbf{c}^*\mathbf{V}_r.
\end{equation}
It is well-known, see, e.g., \cite{grimme97,DeVillemagne1987,gugercin2008} that by choosing $\mathbf{V}_r$ and $\mathbf{W}_r$ as rational Krylov subspaces characterized by the resolvent operator $(\sigma I-\mathbf{A})^{-1}$, the reduced order model \eqref{eq:rom} satisfies the following Hermite type interpolation conditions.
\begin{theorem}(\cite[Lemma 2.1]{gugercin2008})\label{theorem:projection}
	Consider the transfer function $H$ of the full order model in (\ref{eq:fom}) with matrices $\mathbf{A},\mathbf{b},\mathbf{c}$. Consider also the interpolation points $\{\sigma_j\}_{j=1}^r$ such that $(\sigma_j\mathbf{I}-\mathbf{A})$ and $(\sigma_j\mathbf{I}-\widehat{\mathbf{A}}_r)$ are both nonsingular. Let the two projection matrices $\mathbf{V}_r$ and $\mathbf{W}_r$ be chosen such that  
	\begin{equation} \label{eq:projectionmatrices}
		\begin{aligned}
			\textnormal{Ran}(\mathbf{V}_r)&=\textnormal{span}\left\{(\sigma_1\mathbf{I}-\mathbf{A})^{-1}\mathbf{b},\dots, (\sigma_r\mathbf{I}-\mathbf{A})^{-1}\mathbf{b} \right\},\\
			\textnormal{Ran}(\mathbf{W}_r)&=\textnormal{span}\left\{(\sigma_1^*\mathbf{I}-\mathbf{A}^*)^{-1}\mathbf{c},\dots, (\sigma_r^*\mathbf{I}-\mathbf{A}^*)^{-1}\mathbf{c} \right\}.
		\end{aligned}
	\end{equation}
	Then the reduced transfer function $\widehat{H}$ with matrices as in (\ref{eq:systemmatrices}) satisfies
	\begin{equation*}
		\widehat{H}(\sigma_j)  = H (\sigma_j) \; \textnormal{and } \widehat{H}'(\sigma_j)  = H' (\sigma_j) \; \textnormal{for } j=1,\dots,r.
	\end{equation*}
\end{theorem}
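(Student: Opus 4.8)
The plan is to reduce both interpolation conditions to two algebraic ``reproduction'' identities asserting that the reduced primal and dual resolvent vectors coincide with the coordinate representations of the corresponding full-order vectors in the chosen bases. Throughout I abbreviate the full primal and dual vectors by
\[
\mathbf{v}_j := (\sigma_j\mathbf{I}-\mathbf{A})^{-1}\mathbf{b}, \qquad \mathbf{w}_j := (\sigma_j^*\mathbf{I}-\mathbf{A}^*)^{-1}\mathbf{c},
\]
so that by \eqref{eq:projectionmatrices} one has $\mathbf{v}_j\in\operatorname{Ran}(\mathbf{V}_r)$ and $\mathbf{w}_j\in\operatorname{Ran}(\mathbf{W}_r)$, i.e.\ $\mathbf{v}_j=\mathbf{V}_r\mathbf{e}_j$ and $\mathbf{w}_j=\mathbf{W}_r\mathbf{g}_j$ for suitable coordinate vectors $\mathbf{e}_j,\mathbf{g}_j\in\mathbb{C}^r$.

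First I would establish the value condition by showing that the reduced primal vector $\widehat{\mathbf{v}}_j:=(\sigma_j\mathbf{I}-\widehat{\mathbf{A}}_r)^{-1}\widehat{\mathbf{b}}_r$ equals $\mathbf{e}_j$. Inserting the definitions \eqref{eq:systemmatrices} of $\widehat{\mathbf{A}}_r$ and $\widehat{\mathbf{b}}_r$, rewriting $\sigma_j\mathbf{e}_j=(\mathbf{W}_r^*\mathbf{V}_r)^{-1}\mathbf{W}_r^*\sigma_j\mathbf{V}_r\mathbf{e}_j$, and collecting terms yields
\[
(\sigma_j\mathbf{I}-\widehat{\mathbf{A}}_r)\mathbf{e}_j=(\mathbf{W}_r^*\mathbf{V}_r)^{-1}\mathbf{W}_r^*(\sigma_j\mathbf{I}-\mathbf{A})\mathbf{v}_j=(\mathbf{W}_r^*\mathbf{V}_r)^{-1}\mathbf{W}_r^*\mathbf{b}=\widehat{\mathbf{b}}_r,
\]
where the middle equality uses $(\sigma_j\mathbf{I}-\mathbf{A})\mathbf{v}_j=\mathbf{b}$. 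Since $\sigma_j\mathbf{I}-\widehat{\mathbf{A}}_r$ is nonsingular by hypothesis, $\widehat{\mathbf{v}}_j=\mathbf{e}_j$, and hence $\widehat{H}(\sigma_j)=\widehat{\mathbf{c}}_r^*\mathbf{e}_j=\mathbf{c}^*\mathbf{V}_r\mathbf{e}_j=\mathbf{c}^*\mathbf{v}_j=H(\sigma_j)$.

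For the Hermite part I would first rewrite both derivatives through the resolvent identity $\tfrac{\mathrm{d}}{\mathrm{d}s}(s\mathbf{I}-\mathbf{A})^{-1}=-(s\mathbf{I}-\mathbf{A})^{-2}$, obtaining the bilinear expressions $H'(\sigma_j)=-\mathbf{w}_j^*\mathbf{v}_j$ and $\widehat{H}'(\sigma_j)=-\widehat{\mathbf{w}}_j^*\widehat{\mathbf{v}}_j$, with $\widehat{\mathbf{w}}_j:=(\sigma_j^*\mathbf{I}-\widehat{\mathbf{A}}_r^*)^{-1}\widehat{\mathbf{c}}_r$. The remaining step is a dual reproduction identity: repeating the above computation for the adjoint quantities, now using the $\mathbf{W}_r$ condition together with $\widehat{\mathbf{c}}_r=\mathbf{V}_r^*\mathbf{c}$, gives $\widehat{\mathbf{w}}_j=(\mathbf{V}_r^*\mathbf{W}_r)\mathbf{g}_j$. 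Substituting both reproduction identities and using $(\mathbf{V}_r^*\mathbf{W}_r)^*=\mathbf{W}_r^*\mathbf{V}_r$ telescopes the coordinate factors,
\[
\widehat{H}'(\sigma_j)=-\widehat{\mathbf{w}}_j^*\widehat{\mathbf{v}}_j=-\mathbf{g}_j^*(\mathbf{W}_r^*\mathbf{V}_r)\mathbf{e}_j=-(\mathbf{W}_r\mathbf{g}_j)^*(\mathbf{V}_r\mathbf{e}_j)=-\mathbf{w}_j^*\mathbf{v}_j=H'(\sigma_j).
\]

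I expect the main obstacle to be the bookkeeping forced by the oblique nature of the Petrov--Galerkin projector: because the factor $(\mathbf{W}_r^*\mathbf{V}_r)^{-1}$ sits on the left in $\widehat{\mathbf{A}}_r$ but transposes to the right in $\widehat{\mathbf{A}}_r^*$, the primal and dual reproduction identities are not literally symmetric, and one must track precisely where the Gramian $\mathbf{W}_r^*\mathbf{V}_r$ and its conjugate transpose appear so that they cancel cleanly in the final bilinear form. It is exactly this cancellation---rather than $\widehat{\mathbf{v}}_j$ and $\widehat{\mathbf{w}}_j$ individually reproducing full-order objects---that yields the derivative match, which is why matching $H'$ genuinely requires \emph{both} conditions in \eqref{eq:projectionmatrices} simultaneously, whereas the value condition needed only one.
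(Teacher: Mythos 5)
Your proof is correct and complete: both reproduction identities $\widehat{\mathbf{v}}_j=\mathbf{e}_j$ and $\widehat{\mathbf{w}}_j=(\mathbf{V}_r^*\mathbf{W}_r)\mathbf{g}_j$ check out, and the cancellation of the Gramian in the bilinear form for the derivative is exactly right. The paper itself offers no proof of this statement, deferring entirely to the cited reference, and your argument is essentially the standard one given there, so there is nothing further to reconcile.
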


Since the choice of interpolation points has a significant influence on the quality of the reduced model, different selection strategies for $\sigma_j$ have been proposed. For our purposes, so-called $\mathcal{H}_2$ optimal interpolation points, see \cite{gugercin2008,gerstner2010}, will be of particular relevance.

\subsection{Optimal \texorpdfstring{$\mathcal{H}_2$}{TEXT} model reduction}\label{sec:H2}

Recall that for functions $G,H$ that are analytic in the open right half plane, the Hardy space $\mathcal{H}_2(\mathbb{C}_+)$ (\cite[Section 5.1.3]{antoulas2005}) is defined as 
\begin{equation*}
	\mathcal{H}_2(\mathbb{C}_+):=\left\{ H\colon \mathbb{C}_+\rightarrow \mathbb{C} \;\text{analytic}\;\Big\lvert \sup_{x>0}\int_{-\infty}^{\infty}\left\lvert H(x+\mathrm{i} \omega)\right\lvert^2\mathrm{d}\omega < \infty \right\}.
\end{equation*}
Moreover, $\mathcal{H}_2(\mathbb{C}_+)$ becomes a Hilbert space when endowed with the inner product 
\begin{equation}\label{eq:H2Cplus}
	\left\langle H,G\right\rangle_{\mathcal{H}_2(\mathbb{C}_+)}:=\frac{1}{2\pi}\int_{-\infty}^{\infty} H(\mathrm{i} \omega)^* G(\mathrm{i} \omega)\,\mathrm{d}\omega,
\end{equation}
and induced norm
\begin{equation*}
	\| H\|_{\mathcal{H}_2(\mathbb{C}_+)}:=\left(\frac{1}{2\pi}\int_{-\infty}^{\infty}\left\lvert H(\mathrm{i} \omega)\right\lvert^2\mathrm{d}\omega\right)^{\frac{1}{2}}.
\end{equation*}
Given a transfer function $H$ of a continuous time asymptotically stable system with poles in the open left half plane, the $\mathcal{H}_2$ optimal model reduction problem is 
\begin{equation}\label{eq:H2optprob}
	\min\limits_{\myatop{\tilde{H} \text{ asymp.~stable}}{\mathrm{dim}(\tilde{H})=r}}
\|H-\tilde{H}\|_{\mathcal{H}_2(\mathbb{C}_+)}.
\end{equation}
In \cite{meier1967,gugercin2008} it was proved that if $\widehat{H}$, with poles $\{\widehat{\lambda}_j\}_{j=1}^r$, is a local minimizer of (\ref{eq:H2optprob}), then 
\begin{equation}\label{eq:H2Coptimalconditions}
	\widehat{H}(-\widehat{\lambda}_j^*)=H(-\widehat{\lambda}_j^*)\text{ and } \widehat{H}'(-\widehat{\lambda}_j^*)=H'(-\widehat{\lambda}_j^*)\text{ for }j=1,\dots,r.
\end{equation}
These interpolation conditions are usually referred to as \textit{Meier-Luenberger conditions}, see \cite{gugercin2008}.
A similar result was proved for discrete time systems. In this case, the transfer functions $H$ and $\tilde{H}$ are analytic in $\bar{\mathbb{D}}^{\mathsf{c}}$ (see \cite{gugercin2008,gerstner2010}). The optimal $\mathcal{H}_2$ framework  minimizes the error norm
\begin{equation*}
	\| H-\tilde{H}\|_{\mathcal{H}_2(\bar{\mathbb{D}}^{\mathsf{c}})}:=\left(\frac{1}{2\pi}\int_{0}^{2\pi}\left\lvert H(e^{\mathrm{i}\vartheta})-\tilde{H}(e^{\mathrm{i}\vartheta})\right\lvert^2\mathrm{d}\vartheta\right)^{\frac{1}{2}},
\end{equation*}
and for $\widehat{H}$ being a local minimizer we have that
\begin{equation}\label{eq:h2Doptimalconditions}
	\widehat{H}\left(1/\widehat{\lambda}_j^*\right)=H\left(1/\widehat{\lambda}_j^*\right)\text{ and } \widehat{H}'\left(1/\widehat{\lambda}_j^*\right)=H'\left(1/\widehat{\lambda}_j^*\right)\text{ for }j=1,\dots,r.
\end{equation}
Throughout this manuscript, we refer to  (\ref{eq:H2Coptimalconditions}) and (\ref{eq:h2Doptimalconditions}) as the $\mathcal{H}_2(\mathbb{C}_+)$ and $\mathcal{H}_2(\bar{\mathbb{D}}^{\mathsf{c}})$ optimality conditions respectively, thereby emphasizing in which set the functions are analytic. This will turn out useful for the upcoming results. 
 
 Note that as long as the systems are assumed to be asymptotically stable, considering Hardy spaces on the specific sets $\mathbb{C}_+$ and $\bar{\mathbb{D}}^{\mathsf{c}}$ is sufficient. For cases in which the poles reside in different sets or when a more granular view of the spectrum is desired, the conditions (\ref{eq:H2Coptimalconditions}) and (\ref{eq:h2Doptimalconditions}) may not be an ideal choice. In this context, \cite{kubalinska2008} discussed generalizations of (\ref{eq:H2Coptimalconditions}) and (\ref{eq:h2Doptimalconditions}) to the shifted right half plane and disks with arbitrary radii, thereby allowing for unstable models. 
Although these findings can be applied to a wider set of functions, they still restrict one to a very specific shape of the considered domains.
In the next section we show that it is possible to generalize these frameworks to rational transfer functions with poles located in general domains, i.e., non-empty connected open sets.

\section{The \texorpdfstring{$\mathcal{H}_2(\bar{\mathbb{A}
}^{\mathsf{c}})$}{TEXT} space}\label{sec:H2Abar}

Consider $\mathbb{A}\subset \mathbb{C}$ to be a non-empty connected open set in the complex plane.
We define $ F$ and $ G$ as the rational functions
\begin{equation}\label{eq:functionbasic}
	 F(s) = \sum_{i=1}^n \frac{\phi_i}{s-\lambda_i}, \quad G(s) = \sum_{j=1}^q \frac{\nu_j}{s-\mu_j}
\end{equation}
where $\phi_i=\mathsf{res}[ F(s),\lambda_i]$ and $\nu_j=\mathsf{res}[ G(s),\mu_j]$. In particular, we assume both $F$ and $G$ to have only simple poles.

Throughout this section we make extensive use of conformal maps and therefore recall the conformal mapping theorem. 
\begin{theorem}(\cite[Theorem 6.1.2]{Wegert2012})\label{th:com} 
Suppose $\mathbb{X},\mathbb{Y}\subset\mathbb{C}$ are open sets and let $\psi\colon\mathbb{X}\rightarrow\mathbb{Y}$ be Fr\'echet differentiable as a function of two real variables. The mapping $\psi$ is conformal in $\mathbb{X}$ if and only if it is analytic in $\mathbb{X}$ and $\psi'(s_0)\neq 0$ for every $s_0\in\mathbb{X}$.
\end{theorem}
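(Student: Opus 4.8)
My plan is to reduce this global, nonlinear statement to a pointwise question about the real-linear approximation of $\psi$, and then to a purely linear-algebraic characterization of angle preservation. First I would fix $s_0\in\mathbb{X}$, write $s=x+\iunit y$ and $\psi=u+\iunit v$, and let $J(s_0)$ denote the Jacobian of $(x,y)\mapsto(u,v)$, which exists by Fr\'echet differentiability. Since $\psi(s_0+h)=\psi(s_0)+J(s_0)h+o(|h|)$, the oriented angle between the images of two smooth curves crossing at $s_0$ is governed, to leading order, by how the linear map $J(s_0)$ acts on their tangent vectors. Hence I would show that conformality of $\psi$ at $s_0$ is equivalent to $J(s_0)$ being an orientation- and angle-preserving (i.e.\ conformal) linear map of $\R^2$.

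The core step is then a linear-algebra characterization: a nonsingular real matrix $J=\begin{pmatrix} u_x & u_y \\ v_x & v_y \end{pmatrix}$ preserves oriented angles exactly when it is a positive multiple of a rotation, i.e.\ $J^{\top}J=\rho^2 I$ with $\det J>0$, which forces the form $J=\begin{pmatrix} a & -b \\ b & a \end{pmatrix}$. Reading off the entries, this is precisely the Cauchy--Riemann system $u_x=v_y$, $u_y=-v_x$ together with $a^2+b^2>0$. Because $\psi$ is assumed real differentiable, the Cauchy--Riemann equations are equivalent to complex differentiability of $\psi$ at $s_0$, and the identity $\det J(s_0)=u_xv_y-u_yv_x=a^2+b^2=|\psi'(s_0)|^2$ shows that the nonvanishing of the scale factor is exactly $\psi'(s_0)\neq 0$. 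Running this equivalence at every $s_0\in\mathbb{X}$ yields both implications simultaneously.

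For the reverse implication I would also record a direct complex-analytic argument, which is cleaner and will be useful to cross-check the degenerate case: if $\psi$ is analytic with $\psi'(s_0)\neq 0$, then $\psi(s_0+h)-\psi(s_0)=\psi'(s_0)h+o(|h|)$, and multiplication by the fixed nonzero number $\psi'(s_0)$ rotates every tangent vector by $\arg\psi'(s_0)$ and rescales by $|\psi'(s_0)|$, so oriented angles between curves are preserved.

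The hard part will be the honest passage from curves to tangent vectors and the treatment of the degenerate case, rather than the linear algebra. I would need to verify that the $o(|h|)$ remainder does not corrupt the limiting angle between the image curves — this is exactly where Fr\'echet (rather than merely directional) differentiability is indispensable — and, for the forward direction, that when $\psi'(s_0)=0$ conformality genuinely fails: either $J(s_0)$ is singular and collapses some angle, or the leading behavior is of the form $h\mapsto c\,h^k$ with $k\ge 2$, which multiplies angles by $k$. Making this remainder and degeneracy analysis rigorous is the crux of the proof.
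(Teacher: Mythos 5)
The paper offers no proof of this statement: it is imported verbatim as \cite[Theorem 6.1.2]{Wegert2012} and used as a black box, so there is no internal argument to compare yours against. Your proposal is the standard textbook proof of that classical result and is essentially sound: linearize at $s_0$ via the Jacobian $J(s_0)$, characterize the nonsingular real $2\times 2$ matrices preserving oriented angles as positive multiples of rotations, i.e.\ of the form $\left(\begin{smallmatrix} a & -b \\ b & a\end{smallmatrix}\right)$, read off the Cauchy--Riemann equations, and use $\det J(s_0)=|\psi'(s_0)|^2$ together with the expansion $\psi(s_0+h)-\psi(s_0)=\psi'(s_0)h+o(|h|)$ for the converse. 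Two remarks on the parts you single out as the crux. First, the passage from curves to tangent vectors needs no separate remainder estimate: by the chain rule the image curve $t\mapsto\psi(\gamma(t))$ has derivative $J(s_0)\gamma'(t_0)$, which is exactly where Fr\'echet differentiability is used, so the $o(|h|)$ term never contaminates the limiting angle. Second, your degenerate-case analysis is heavier than the statement requires: under the standing real-differentiability hypothesis, conformality at $s_0$ already forces $J(s_0)$ to be a \emph{nonsingular} conformal matrix (a singular $J(s_0)$ annihilates some tangent direction, so the oriented angle between image curves is undefined or collapsed), which delivers the Cauchy--Riemann equations and $\psi'(s_0)\neq 0$ simultaneously; the local model $h\mapsto ch^k$ with angles multiplied by $k$ presupposes analyticity in a neighbourhood and is not needed for the forward implication. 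Finally, pointwise complex differentiability on the open set $\mathbb{X}$ upgrades to analyticity (Goursat), which closes the equivalence as stated.
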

Similar statements on the properties of conformal maps can be found in \cite[Theorem I.5.15]{freitag2006}.
From \cite[Section 2.6]{kythe2019} and \cite[Section 1.1]{dmitry2019} we also have the following properties: (\textit{i}) Since $\psi$ is analytic in $\mathbb{X}$ and $\psi'$ does not vanish, $\psi$ is injective in $\mathbb{X}$. (\textit{ii}) If $\psi\colon\mathbb{X}\rightarrow\mathbb{Y}$ is conformal and bijective, then also its inverse $\psi^{-1}\colon\mathbb{Y}\rightarrow\mathbb{X}$ is conformal and bijective. (\textit{iii}) If $\psi\colon\mathbb{X}\rightarrow\mathbb{Y}$ and $\xi\colon\mathbb{Y}\rightarrow\mathbb{E}$ are conformal mappings, then $\xi\circ\psi$ is conformal. 
For $\psi$ being analytic in $\mathbb{X}\cup\{s\in \mathbb{C}\big\lvert -s^*\in\mathbb{X}\}$, it holds that  $\overline{\psi}(s_0)^*=\psi(s_0^*)$ for  $s_0\in\mathbb{X}$, cf.~also the notation in Section \ref{sec:notation}.

Before we introduce the space $\mathcal{H}_2(\bar{\mathbb{A}}^{\mathsf{c}})$, which we subsequently use to define our optimal model reduction framework, we make the following assumption.
\begin{assumption}\label{assumption:1} Let the meromorphic function $\psi\colon\mathbb{C}\rightarrow\mathbb{C}$ be given. 
We assume $\psi\colon\mathbb{X} \rightarrow\mathbb{A}$, with $\mathbb{X} \subseteq \mathbb{C}_-$, to be bijective conformal. Let  $\tilde{\mathbb{X}}\subseteq\bar{\mathbb{X}}^{\mathsf{c}}$ be an open subset such that $\{s\in \mathbb{C}\big\lvert -s^*\in\mathbb{X}\}\subseteq\tilde{\mathbb{X}}$. We assume $\psi$ to conformally map $\tilde{\mathbb{X}}$ into $\bar{\mathbb{A}}^{\mathsf{c}}$. In addition, $\psi'$ is zero at a finite number of points in $\bar{\mathbb{X}}^{\mathsf{c}}$. In summary, we assume $\psi$ to fulfill
\begin{equation*}
	\begin{cases}
		\psi\colon\mathbb{X} \rightarrow\mathbb{A} \;\text{is bijective conformal},\\
		\psi\colon\tilde{\mathbb{X}} \rightarrow\bar{\mathbb{A}}^{\mathsf{c}} \;\text{is conformal},
	\end{cases}
\end{equation*}
\end{assumption}

\begin{figure}
	\centering
	\includegraphics[width=1\textwidth]{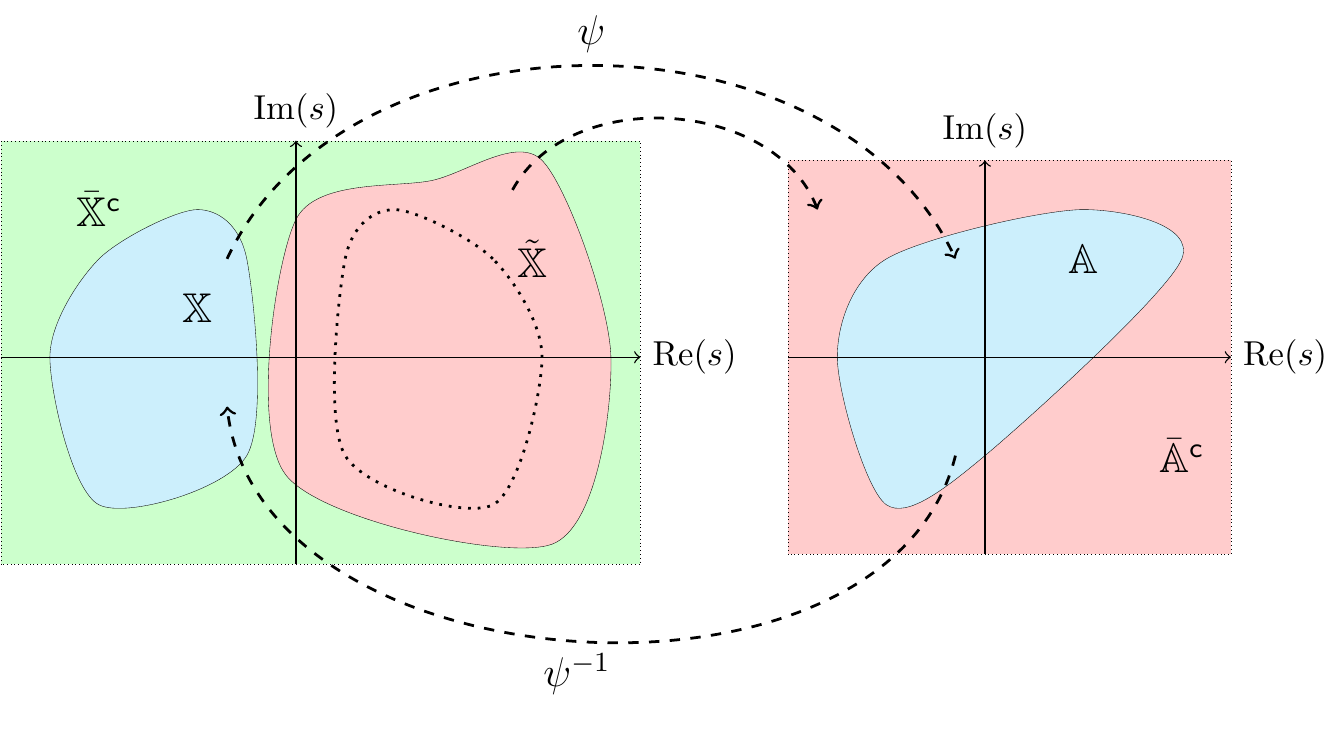}
	\caption{Depiction of the sets introduced in Assumption \ref{assumption:1} and the mapping $\psi$ along with its inverse $\psi^{-1}$. Here the dashed closed line indicates the boundary of the set $\{s\in \mathbb{C}\big\lvert -s^*\in\mathbb{X}\}$.}
	\label{fig:depictionpsi}
\end{figure}
From now on, we assume that Assumption \ref{assumption:1} is satisfied. In Figure \ref{fig:depictionpsi} we depict the domains $\mathbb{X}$, $\mathbb{A}$, $\tilde{\mathbb{X}}$ and their exteriors, along with the mapping $\psi$. 
It is worth mentioning that, for $\mathbb{A}\subset\mathbb{C}$ being a non-empty open simply connected domain and $\mathbb{X}$ being the left half complex plane $\mathbb{C}_-$, then there exists a bijective conformal map $\psi\colon\mathbb{C}_-\rightarrow\mathbb{A}$. This is a result of the Riemann mapping theorem (see \cite[Theorem 6.4.1]{Wegert2012}).
Now, consider $F$ as in (\ref{eq:functionbasic}) having poles $\lambda_{i}\in\mathbb{A}$, $i=1,\dots,n$, such that it is analytic in $\bar{\mathbb{A}}^{\mathsf{c}}$. 
It is easy to prove that in case $\psi\colon\mathbb{C}_+\rightarrow\bar{\mathbb{A}}^{\mathsf{c}}$ is analytic, then $F(\psi(\cdot))$ and $F(\psi(\cdot^*))^*$ are analytic in $\mathbb{C}_+$ (see also the proof of \cite[Theorem 6.6.2]{Wegert2012}). 
In addition, $F(\psi(-\cdot))$ and $\overline{F}(\overline{\psi}(-\cdot))=F(\psi(-\cdot^*))^*$ are analytic in $\mathbb{C}_-$. 
Similar arguments apply in the case that $\psi$ is analytic in $\mathbb{X}\subset\mathbb{C}_-$. This would then result in  $F(\psi(-\cdot^*))^*$ being analytic in the set $\{s\in \mathbb{C}\big\lvert -s^*\in\mathbb{X}\}$.

We now define the $\mathcal{H}_2$ space for functions analytic in $\bar{\mathbb{A}}^{\mathsf{c}}$. This definition relies on the concepts introduced by Duren in \cite[Chapter 10]{duren1970}. The main difference is the set in which $\psi$ is conformal.  
\begin{definition}[$\mathcal{H}_2(\bar{\mathbb{A}}^{\mathsf{c}})$ space] \label{def:H2A}
	 Let $ f\colon\bar{\mathbb{A}}^{\mathsf{c}}\rightarrow\mathbb{C}$ and $  g\colon\bar{\mathbb{A}}^{\mathsf{c}}\rightarrow\mathbb{C}$ be analytic. 
	 Denote by
	 \begin{equation} \label{eq:operatorH}
	 	\mathfrak{H}_f(s) = f(\psi(s))\psi'(s)^{\frac{1}{2}},	
	 \end{equation} 
	 then the $\mathcal{H}_2(\bar{\mathbb{A}}^{\mathsf{c}})$ inner product is defined as 
	 \begin{equation*}
	 	\left\langle  f,  g\right\rangle_{\mathcal{H}_2(\bar{\mathbb{A}}^{\mathsf{c}})} := \left\langle \mathfrak{H}_f,\mathfrak{H}_g\right\rangle_{\mathcal{H}_2(\mathbb{C}_+)}
	 \end{equation*}
	with the induced $\mathcal{H}_2(\bar{\mathbb{A}}^{\mathsf{c}})$-norm 
	\begin{equation*}
		\| f\|_{\mathcal{H}_2(\bar{\mathbb{A}}^{\mathsf{c}})} := \left\|\mathfrak{H}_f\right\|_{\mathcal{H}_2(\mathbb{C}_+)}=\left( \left\langle \mathfrak{H}_f,\mathfrak{H}_f\right\rangle_{\mathcal{H}_2(\mathbb{C}_+)}\right)^{\frac{1}{2}}.
	\end{equation*}
	The space $\mathcal{H}_2(\bar{\mathbb{A}}^{\mathsf{c}})$  is defined as 
	\begin{equation*}
		\mathcal{H}_2(\bar{\mathbb{A}}^{\mathsf{c}}):=\left\{ f\colon\bar{\mathbb{A}}^{\mathsf{c}}\rightarrow\mathbb{C}\;\textnormal{analytic}\; \bigg\lvert \| f\|_{\mathcal{H}_2(\bar{\mathbb{A}}^{\mathsf{c}})}<\infty\right\}.
	\end{equation*}
\end{definition}
It follows from Definition \ref{def:H2A} that if $f\in\mathcal{H}_2(\bar{\mathbb{A}}^{\mathsf{c}})$ then $\mathfrak{H}_f\in\mathcal{H}_2(\mathbb{C}_+)$. Note that the norm strongly depends on the set $\mathbb{A}$ as well as the chosen map $\psi$.
Obviously, for $\psi(s)=s$ we have $\mathcal{H}_2(\bar{\mathbb{A}}^{\mathsf{c}}) \equiv \mathcal{H}_2(\mathbb{C}_+)$ leading to the results mentioned in Section \ref{sec:H2}. 
Instead of computing the $\mathcal{H}_2(\bar{\mathbb{A}}^{\mathsf{c}})$ inner product through the integral in (\ref{eq:H2Cplus}), we now derive a pole residue expression similar to \cite[Lemma 2.4]{gugercin2008}. 
\begin{lemma}\label{lemma:frakHinnerproduct}
	Consider the functions $f,g\in\mathcal{H}_2(\bar{\mathbb{A}}^{\mathsf{c}})$. 
	Let $\mathfrak{H}_f$ and $\mathfrak{H}_g$ have finitely many poles $\{\mathfrak{l}_i\}_{i=1}^n\in\mathbb{C}_-$ and $\{\mathfrak{m}_j\}_{j=1}^q\in\mathbb{C}_-$. Then 
	\begin{equation*}
			\left\langle  f,  g\right\rangle_{\mathcal{H}_2(\bar{\mathbb{A}}^{\mathsf{c}})} = \sum^q_{j=1}\mathsf{res}\left[\overline{\mathfrak{H}}_f(-s)\mathfrak{H}_g(s), \mathfrak{m}_j\right] = \sum^n_{i=1}\mathsf{res}\left[\overline{\mathfrak{H}}_g(-s)\mathfrak{H}_f(s), \mathfrak{l}_i\right]^*,
	\end{equation*}
where $\overline{\mathfrak{H}}_f(-s)= \overline{f}(\overline{\psi}(-s))\overline{\psi'}(-s)^{\frac{1}{2}}$. 
\end{lemma}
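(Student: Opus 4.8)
The plan is to reduce the claim to the classical pole--residue representation of the $\mathcal{H}_2(\mathbb{C}_+)$ inner product (in the spirit of \cite[Lemma 2.4]{gugercin2008}), now applied to the transformed functions $\mathfrak{H}_f$ and $\mathfrak{H}_g$. Indeed, by Definition~\ref{def:H2A} we have $\langle f,g\rangle_{\mathcal{H}_2(\bar{\mathbb{A}}^{\mathsf{c}})}=\langle\mathfrak{H}_f,\mathfrak{H}_g\rangle_{\mathcal{H}_2(\mathbb{C}_+)}$, and since $f,g\in\mathcal{H}_2(\bar{\mathbb{A}}^{\mathsf{c}})$ we know $\mathfrak{H}_f,\mathfrak{H}_g\in\mathcal{H}_2(\mathbb{C}_+)$. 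Hence it suffices to evaluate the integral
\[
\langle\mathfrak{H}_f,\mathfrak{H}_g\rangle_{\mathcal{H}_2(\mathbb{C}_+)}=\frac{1}{2\pi}\int_{-\infty}^{\infty}\mathfrak{H}_f(\mathrm{i}\omega)^*\,\mathfrak{H}_g(\mathrm{i}\omega)\,\mathrm{d}\omega
\]
by residue calculus, exploiting that $\mathfrak{H}_g$ has its finitely many poles $\{\mathfrak{m}_j\}$ in $\mathbb{C}_-$ and that $\mathfrak{H}_f$ has its poles $\{\mathfrak{l}_i\}$ in $\mathbb{C}_-$.

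The first step is to rewrite the integrand as the boundary value of a single meromorphic function. Using the conjugation convention of Section~\ref{sec:notation}, $\overline{\mathfrak{H}}_f(w)=\mathfrak{H}_f(w^*)^*$, so that $\overline{\mathfrak{H}}_f(-s)=\overline{f}(\overline{\psi}(-s))\overline{\psi'}(-s)^{1/2}$ as stated. Evaluating at $s=\mathrm{i}\omega$ with $\omega\in\mathbb{R}$ and using $(-\mathrm{i}\omega)^*=\mathrm{i}\omega$ gives $\overline{\mathfrak{H}}_f(-\mathrm{i}\omega)=\mathfrak{H}_f(\mathrm{i}\omega)^*$. Consequently the integrand equals $R(\mathrm{i}\omega)$, where $R(s):=\overline{\mathfrak{H}}_f(-s)\,\mathfrak{H}_g(s)$ is the restriction to the imaginary axis of a meromorphic function.

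Next I would locate the poles of $R$ and apply the residue theorem. Since $\mathfrak{H}_f$ has poles in $\mathbb{C}_-$ at the $\mathfrak{l}_i$, the factor $\overline{\mathfrak{H}}_f(-s)$ has poles precisely at $-\mathfrak{l}_i^*\in\mathbb{C}_+$, while $\mathfrak{H}_g$ contributes the poles $\mathfrak{m}_j\in\mathbb{C}_-$; in particular $R$ is analytic on the imaginary axis. Parametrizing $s=\mathrm{i}\omega$ turns the integral into a contour integral along the imaginary axis, which I close by a large semicircle in the left half-plane. Because $\mathfrak{H}_f,\mathfrak{H}_g\in\mathcal{H}_2(\mathbb{C}_+)$ have only finitely many poles, they are strictly proper and decay like $O(|s|^{-1})$, so $R=O(|s|^{-2})$ and the semicircular arc contributes nothing in the limit. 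The residue theorem then collects exactly the poles $\{\mathfrak{m}_j\}$ enclosed in the left half-plane, yielding
\[
\langle f,g\rangle_{\mathcal{H}_2(\bar{\mathbb{A}}^{\mathsf{c}})}=\sum_{j=1}^{q}\mathsf{res}\!\left[\overline{\mathfrak{H}}_f(-s)\mathfrak{H}_g(s),\mathfrak{m}_j\right],
\]
which is the first asserted equality.

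Finally, the second equality follows from conjugate symmetry of the inner product rather than a second contour computation. Since $\langle f,g\rangle_{\mathcal{H}_2(\bar{\mathbb{A}}^{\mathsf{c}})}=\langle g,f\rangle_{\mathcal{H}_2(\bar{\mathbb{A}}^{\mathsf{c}})}^*$, applying the first equality with the roles of $f$ and $g$ interchanged (so that now the poles $\{\mathfrak{l}_i\}$ of $\mathfrak{H}_f$ are enclosed) and taking complex conjugates gives
\[
\langle f,g\rangle_{\mathcal{H}_2(\bar{\mathbb{A}}^{\mathsf{c}})}=\sum_{i=1}^{n}\mathsf{res}\!\left[\overline{\mathfrak{H}}_g(-s)\mathfrak{H}_f(s),\mathfrak{l}_i\right]^*.
\]
I expect the main technical obstacle to be the rigorous justification of the vanishing semicircular arc: unlike the classical rational case, $\mathfrak{H}_f$ and $\mathfrak{H}_g$ need not be rational because of the factors $f\circ\psi$ and $(\psi')^{1/2}$, so the decay estimate must be extracted from $\mathcal{H}_2(\mathbb{C}_+)$ membership together with the finiteness of the pole set, and one must verify that the chosen branch of $(\psi')^{1/2}$ is consistent with the conjugation identity $\overline{\mathfrak{H}}_f(-\mathrm{i}\omega)=\mathfrak{H}_f(\mathrm{i}\omega)^*$ used above.
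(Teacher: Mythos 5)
Your proposal is correct and is essentially the paper's proof written out in full: the paper disposes of this lemma in one line by citing \cite[Lemma 2.4]{gugercin2008} applied to $\mathfrak{H}_f$ and $\mathfrak{H}_g$, and your contour-integration argument (reduce to the $\mathcal{H}_2(\mathbb{C}_+)$ inner product, close in the left half-plane, then get the second identity from conjugate symmetry) is precisely the standard derivation of that cited result. The arc-decay caveat you raise is real for non-rational $\mathfrak{H}_f,\mathfrak{H}_g$, but the paper's citation of the rational-case lemma glosses over the same point, so your treatment is no less rigorous than the original.
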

\begin{proof} The proof directly follows from \cite[Lemma 2.4]{gugercin2008} applied to $\mathfrak{H}_f$ and $\mathfrak{H}_g$, respectively. 
\end{proof}
Lemma \ref{lemma:frakHinnerproduct} allows to compute the $\mathcal{H}_2(\bar{\mathbb{A}}^{\mathsf{c}})$-norm as follows
\begin{equation*}
		\|  f\|_{\mathcal{H}_2(\bar{\mathbb{A}}^{\mathsf{c}})}^2 = \sum^n_{i=1}\overline{\mathfrak{H}}_f(-\mathfrak{l}_i)\mathsf{res}\left[\mathfrak{H}_f(s), \mathfrak{l}_i\right].
\end{equation*}
We now consider the specific case where $f$ is a rational function as in (\ref{eq:functionbasic}) and therefore adopt the notation $F$ that we previously introduced for functions that have a rational structure. We begin by examining the case where $\psi$ and $\psi'(\cdot)^{\frac{1}{2}}$ share the same poles in the complex plane which happens to be the case if $\psi$ is a rational function with simple poles. If $F$ is rational as in (\ref{eq:functionbasic}), then this leads to $\mathfrak{H}_F$ having the same poles as $F\circ\psi$. In other words, if we denote by $\{\lambda_i\}_{i=1}^n$ the poles of $F$ and by $\{\mathfrak{l}_i\}_{i=1}^n$ the poles of $\mathfrak{H}_F$, then $\{\psi^{-1}(\lambda_i)\}_{i=1}^n=\{\mathfrak{l}_i\}_{i=1}^n$.
\begin{lemma}\label{lemma:residue}
	Let $F(\cdot) = \sum_{i=1}^n \frac{\phi_i}{\cdot-\lambda_i}$,
	with $\lambda_i\in\mathbb{A}$ for $i=1,\dots,n$. Let $\psi$ and $\psi'(\cdot)^{\frac{1}{2}}$ have the same poles. Then
	\begin{equation}\label{eq:residueHfrakF}
		\mathsf{res}\left[ \mathfrak{H}_F(s),\psi^{-1}(\lambda_i)\right] = \phi_i \psi'(\psi^{-1}(\lambda_i))^{-\frac{1}{2}},
	\end{equation}
	for $i=1,\dots,n$.
\end{lemma}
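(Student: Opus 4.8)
The plan is to compute the residue directly from the simple-pole limit formula, using conformality of $\psi$ to control the behaviour of $\mathfrak{H}_F$ near the relevant point. First I would set $\mathfrak{l}_i := \psi^{-1}(\lambda_i)$. Since $\psi\colon\mathbb{X}\to\mathbb{A}$ is bijective conformal by Assumption~\ref{assumption:1} and $\lambda_i\in\mathbb{A}$, the point $\mathfrak{l}_i$ is well defined, lies in $\mathbb{X}\subseteq\mathbb{C}_-$, and satisfies $\psi(\mathfrak{l}_i)=\lambda_i$. Moreover, by Theorem~\ref{th:com} conformality forces $\psi'(\mathfrak{l}_i)\neq 0$, so $\psi'$ is holomorphic and nonvanishing in a neighbourhood of $\mathfrak{l}_i$; consequently a single-valued analytic branch of $\psi'(\cdot)^{\frac{1}{2}}$ exists there, is nonzero at $\mathfrak{l}_i$, and tends to $\psi'(\mathfrak{l}_i)^{\frac{1}{2}}$ as $s\to\mathfrak{l}_i$.

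Next I would argue that $\mathfrak{l}_i$ is a simple pole of $\mathfrak{H}_F$ and isolate its singular part. Writing $F(\psi(s))=\sum_{k=1}^n \phi_k/(\psi(s)-\lambda_k)$, the injectivity of $\psi$ on $\mathbb{X}$ together with $\lambda_k\neq\lambda_i$ for $k\neq i$ guarantees that every summand with $k\neq i$ is analytic at $\mathfrak{l}_i$; only the $i$-th summand is singular there, and since $\psi(s)-\lambda_i=\psi(s)-\psi(\mathfrak{l}_i)$ has a simple zero at $\mathfrak{l}_i$ (because $\psi'(\mathfrak{l}_i)\neq0$), this summand has a simple pole. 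Multiplying by the locally analytic, nonvanishing factor $\psi'(s)^{\frac{1}{2}}$ does not change the pole order, so $\mathfrak{l}_i$ is a simple pole of $\mathfrak{H}_F$, consistent with the identification $\{\psi^{-1}(\lambda_i)\}_{i=1}^n=\{\mathfrak{l}_i\}_{i=1}^n$ recorded before the lemma.

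With this in hand, I would apply the simple-pole residue formula and pass to the limit:
\begin{equation*}
  \mathsf{res}\left[\mathfrak{H}_F(s),\mathfrak{l}_i\right]
  = \lim_{s\to\mathfrak{l}_i}(s-\mathfrak{l}_i)\,\mathfrak{H}_F(s)
  = \phi_i\,\psi'(\mathfrak{l}_i)^{\frac{1}{2}}\lim_{s\to\mathfrak{l}_i}\frac{s-\mathfrak{l}_i}{\psi(s)-\psi(\mathfrak{l}_i)},
\end{equation*}
where the contributions of the regular summands vanish and $\psi'(s)^{\frac{1}{2}}\to\psi'(\mathfrak{l}_i)^{\frac{1}{2}}$ has been used. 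The remaining limit is the reciprocal difference quotient $1/\psi'(\mathfrak{l}_i)$, so the expression collapses to $\phi_i\,\psi'(\mathfrak{l}_i)^{\frac{1}{2}}\psi'(\mathfrak{l}_i)^{-1}=\phi_i\,\psi'(\mathfrak{l}_i)^{-\frac{1}{2}}$. Substituting $\mathfrak{l}_i=\psi^{-1}(\lambda_i)$ yields~\eqref{eq:residueHfrakF}.

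The computation is essentially routine, and I expect no serious obstacle; the only point demanding care is the treatment of the fractional power $\psi'(\cdot)^{\frac{1}{2}}$. One must confirm that the branch chosen in the definition of $\mathfrak{H}_F$ is genuinely analytic and nonzero at $\mathfrak{l}_i$ — which is exactly what conformality ($\psi'(\mathfrak{l}_i)\neq0$) provides — so that it may be pulled out of the residue limit as the constant $\psi'(\mathfrak{l}_i)^{\frac{1}{2}}$. The hypothesis that $\psi$ and $\psi'(\cdot)^{\frac{1}{2}}$ share their poles is what rules out spurious extra poles of $\mathfrak{H}_F$ elsewhere, but it plays no role in the local residue evaluation at $\mathfrak{l}_i$ itself.
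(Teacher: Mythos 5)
Your proposal is correct and follows essentially the same route as the paper: both isolate the $i$-th summand, pull out the factor $\phi_i\,\psi'(\psi^{-1}(\lambda_i))^{\frac{1}{2}}$, and evaluate $\lim_{s\to\psi^{-1}(\lambda_i)}(s-\psi^{-1}(\lambda_i))/(\psi(s)-\lambda_i)=1/\psi'(\psi^{-1}(\lambda_i))$ (the paper via L'H\^{o}pital, you via the difference-quotient definition of the derivative, which is equivalent). Your additional remarks on the simplicity of the pole, the analytic branch of $\psi'(\cdot)^{\frac{1}{2}}$, and the role of the shared-poles hypothesis are accurate refinements of the paper's argument rather than a departure from it.
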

\begin{proof}
	 Note that since $F$ is a rational function with poles in $\mathbb{A}$, we have that $F\in\mathcal{H}_2(\bar{\mathbb{A}}^{\mathsf{c}})$. By expanding the left hand side of (\ref{eq:residueHfrakF}) and using (\ref{eq:functionbasic}), we get
	\begin{equation*}
		\begin{aligned}
			\mathsf{res}\left[ \mathfrak{H}_F(s),\psi^{-1}(\lambda_i)\right]  
			&=\mathsf{res}\left[ F(\psi(s))\psi'(s)^{\frac{1}{2}},\psi^{-1}(\lambda_i)\right] \\
			&=\lim_{s\rightarrow\psi^{-1}(\lambda_i)}(s-\psi^{-1}(\lambda_i)) \sum_{j=1}^n \frac{\phi_j}{\psi(s)-\lambda_j} \psi'(s)^{\frac{1}{2}}\\
			&= 
			\phi_i\psi'(\psi^{-1}(\lambda_i))^{\frac{1}{2}}\lim_{s\rightarrow\psi^{-1}(\lambda_i)} \frac{s-\psi^{-1}(\lambda_i)}{\psi(s)-\lambda_i}.
		\end{aligned}
	\end{equation*}
	Since both the numerator and denominator approach zero, we apply  L'H\^{o}pital's rule \cite[Chapter 9]{bradley2015} resulting in
	\begin{equation}\label{eq:hospital}
		\begin{aligned}
			\lim_{s\rightarrow\psi^{-1}(\mu)}\frac{s-\psi^{-1}(\mu)}{\psi(s)-\mu} &=  \lim_{s\rightarrow\psi^{-1}(\mu)}\frac{1}{\psi'(s)}
			= \frac{1}{\psi'(\psi^{-1}(\mu))}.
		\end{aligned}
	\end{equation}
	This leads to
	\begin{equation*}
		\begin{aligned}
			\mathsf{res}\left[ \mathfrak{H}_F(s),\psi^{-1}(\lambda_i)\right] &=\frac{\phi_i\psi'(\psi^{-1}(\lambda_i))^{\frac{1}{2}}}{\psi'(\psi^{-1}(\lambda_i))}
			=\phi_i \psi'(\psi^{-1}(\lambda_i))^{-\frac{1}{2}}.
		\end{aligned}
	\end{equation*}
\end{proof}
Combining the previous result with Lemma \ref{lemma:frakHinnerproduct} provides a simpler characterization of the $\mathcal{H}_2(\bar{\mathbb{A}}^{\mathsf{c}})$ inner product.
\begin{corollary} \label{corollary:innerproduct}
	Let $F(\cdot) = \sum_{i=1}^n \frac{\phi_i}{\cdot-\lambda_i}$ and $G(\cdot) = \sum_{j=1}^q \frac{\nu_j}{\cdot-\mu_j}$ have simple poles $\{\lambda_i\}_{i=1}^n\in\mathbb{A}$ and $\{\mu_j\}_{j=1}^q\in\mathbb{A}$, respectively. Assume that  
	$\mathfrak{H}_F$ and  $F\circ\psi$ as well as $\mathfrak{H}_G$ and $G\circ\psi$ have the same poles. Then
	\begin{equation*}
		\begin{aligned}
			\left\langle  F,  G\right\rangle_{\mathcal{H}_2(\bar{\mathbb{A}}^{\mathsf{c}})} &= \sum_{j=1}^q  \overline{\mathfrak{H}}_F(-\psi^{-1}(\mu_j))\left[  \nu_j \psi'(\psi^{-1}(\mu_j))^{-\frac{1}{2}}\right].
		\end{aligned}
	\end{equation*} 
\end{corollary}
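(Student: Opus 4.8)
The plan is to combine the two preceding lemmas: use Lemma~\ref{lemma:frakHinnerproduct} to express the inner product as a sum of residues of $\overline{\mathfrak{H}}_F(-s)\mathfrak{H}_G(s)$, and then use Lemma~\ref{lemma:residue} to evaluate the residues of $\mathfrak{H}_G$ explicitly. The only analytic input needed beyond these two lemmas is the observation that the factor $\overline{\mathfrak{H}}_F(-s)$ is holomorphic at the poles of $\mathfrak{H}_G$, which lets it be pulled outside the residue.

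First I would apply Lemma~\ref{lemma:frakHinnerproduct} with $f=F$ and $g=G$. By hypothesis $\mathfrak{H}_G$ and $G\circ\psi$ share the same poles, and since $G\circ\psi$ has a pole exactly where $\psi(s)=\mu_j$, these poles are $\mathfrak{m}_j=\psi^{-1}(\mu_j)$; because $\mu_j\in\mathbb{A}$ and $\psi\colon\mathbb{X}\to\mathbb{A}$ is bijective conformal with $\mathbb{X}\subseteq\mathbb{C}_-$ (Assumption~\ref{assumption:1}), each $\mathfrak{m}_j$ lies in $\mathbb{C}_-$, as required by the lemma. This yields
\[
\left\langle F, G\right\rangle_{\mathcal{H}_2(\bar{\mathbb{A}}^{\mathsf{c}})} = \sum_{j=1}^q \mathsf{res}\left[\overline{\mathfrak{H}}_F(-s)\mathfrak{H}_G(s), \psi^{-1}(\mu_j)\right].
\]

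Next I would factor $\overline{\mathfrak{H}}_F(-s)$ out of each residue. The key point is that the poles of $\mathfrak{H}_F$ are $\psi^{-1}(\lambda_i)\in\mathbb{C}_-$, so $\overline{\mathfrak{H}}_F$ has its poles at their complex conjugates, and the reflected function $\overline{\mathfrak{H}}_F(-\cdot)$ therefore has all of its poles in $\mathbb{C}_+$. Since each $\psi^{-1}(\mu_j)\in\mathbb{C}_-$, the factor $\overline{\mathfrak{H}}_F(-s)$ is analytic at $s=\psi^{-1}(\mu_j)$, so for the simple pole $\psi^{-1}(\mu_j)$ of $\mathfrak{H}_G$ the residue of the product equals $\overline{\mathfrak{H}}_F(-\psi^{-1}(\mu_j))$ times $\mathsf{res}[\mathfrak{H}_G(s),\psi^{-1}(\mu_j)]$. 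Finally, applying Lemma~\ref{lemma:residue} to $G$ gives $\mathsf{res}[\mathfrak{H}_G(s),\psi^{-1}(\mu_j)]=\nu_j\,\psi'(\psi^{-1}(\mu_j))^{-\frac{1}{2}}$, and substituting this into the sum produces exactly the claimed formula.

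The two lemma applications are routine; the one step that genuinely requires care is verifying that $\overline{\mathfrak{H}}_F(-s)$ is holomorphic at the poles $\psi^{-1}(\mu_j)$ of $\mathfrak{H}_G$, i.e.\ that the poles of the two factors are separated across the imaginary axis. This is precisely where the sign flip $s\mapsto -s$ together with the conjugation in $\overline{\mathfrak{H}}_F$ enters, and it is the hinge on which the pole--residue simplification turns.
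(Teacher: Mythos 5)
Your proof is correct and follows exactly the route the paper intends (the paper states the corollary without a written proof, as an immediate combination of Lemma~\ref{lemma:frakHinnerproduct} and Lemma~\ref{lemma:residue}). Your explicit verification that the poles of $\overline{\mathfrak{H}}_F(-\cdot)$ lie in $\mathbb{C}_+$ and are thus separated from the poles $\psi^{-1}(\mu_j)\in\mathbb{C}_-$ of $\mathfrak{H}_G$ is a detail the paper leaves implicit, and it is stated correctly.
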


For $F\in\mathcal{H}_2(\bar{\mathbb{A}}^{\mathsf{c}})$ and $\psi'(\cdot)^{\frac{1}{2}}$ with simple poles $\{\gamma_\ell\}_{\ell=1}^{m}\in\mathbb{C}_-$, we define  
\begin{equation}\label{eq:mathfrakf}
	\mathfrak{F}_{F}(s):=\overline{\mathfrak{H}}_F(-\psi^{-1}(s))\psi'(\psi^{-1}(s))^{-\frac{1}{2}}+\sum_{\ell=1}^{m}\overline{\mathfrak{H}}_F(-\gamma_\ell)\frac{\mathsf{res}\left[\psi'(s)^{\frac{1}{2}},\gamma_\ell\right]}{\psi(\gamma_\ell)-s}.
\end{equation}
A similar function has been introduced in \cite{anic2013} for a frequency-weighted $\mathcal{H}_2$ model reduction problem. In particular, as in \cite[Corollary 4]{anic2013}, the evaluation of $\mathfrak{F}_{F}$ and its derivative in $\mu$ can be expressed as a specific $\mathcal{H}_2(\bar{\mathbb{A}}^{\mathsf{c}})$ inner product involving rational functions of degree one and two, respectively. 
\begin{lemma}\label{lemma:frakf}
	Let $ F\in\mathcal{H}_2(\bar{\mathbb{A}}^{\mathsf{c}})$ be as in Lemma \ref{lemma:residue} and $\mu\in\mathbb{A}$. Consider $\psi$ and $\psi'(\cdot)^{\frac{1}{2}}$ with different poles. Let $\{\gamma_\ell\}_{\ell=1}^{m}\in\mathbb{C}_-$ with $\gamma_\ell\neq \psi^{-1}(\mu)$, $\ell=1,\dots,m$, be the poles of $\psi'(\cdot)^{\frac{1}{2}}$. Then 
	\begin{equation}\label{eq:frakf}
			\left\langle  F, \frac{1}{\cdot-\mu} \right\rangle_{\mathcal{H}_2(\bar{\mathbb{A}}^{\mathsf{c}})} 
			=    \mathfrak{F}_{F}(\mu), \quad \text{and } \left\langle  F, \frac{1}{(\cdot-\mu)^2} \right\rangle_{\mathcal{H}_2(\bar{\mathbb{A}}^{\mathsf{c}})} 
			=  \mathfrak{F}_{F}'(\mu).
	\end{equation}
\end{lemma}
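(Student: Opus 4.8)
The plan is to evaluate both inner products with Lemma \ref{lemma:frakHinnerproduct}, applied once with $g=\tfrac{1}{\cdot-\mu}$ and once with $g=\tfrac{1}{(\cdot-\mu)^2}$, and to match the resulting residues term by term with the definition of $\mathfrak{F}_{F}$ in \eqref{eq:mathfrakf}. Throughout, $F\in\mathcal{H}_2(\bar{\mathbb{A}}^{\mathsf{c}})$ by hypothesis, and the rational functions $\tfrac{1}{\cdot-\mu},\tfrac{1}{(\cdot-\mu)^2}$ have their only poles at $\mu\in\mathbb{A}$, hence lie in $\mathcal{H}_2(\bar{\mathbb{A}}^{\mathsf{c}})$ as well, so all inner products are well defined.

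First I would treat the left identity. With $g(s)=\tfrac{1}{s-\mu}$, formula \eqref{eq:operatorH} gives $\mathfrak{H}_g(s)=\psi'(s)^{\frac12}/(\psi(s)-\mu)$. Since $\mu\in\mathbb{A}$ yields $\psi^{-1}(\mu)\in\mathbb{X}\subseteq\mathbb{C}_-$ and the poles $\gamma_\ell$ of $\psi'(\cdot)^{\frac12}$ also lie in $\mathbb{C}_-$, the poles of $\mathfrak{H}_g$ in $\mathbb{C}_-$ are precisely $\psi^{-1}(\mu)$ together with $\gamma_1,\dots,\gamma_m$, all simple and distinct because $\gamma_\ell\neq\psi^{-1}(\mu)$. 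Lemma \ref{lemma:frakHinnerproduct} then expresses $\langle F,\tfrac{1}{\cdot-\mu}\rangle_{\mathcal{H}_2(\bar{\mathbb{A}}^{\mathsf{c}})}$ as the sum of residues of $\overline{\mathfrak{H}}_F(-s)\mathfrak{H}_g(s)$ over these poles. At $\psi^{-1}(\mu)$ the only singular factor is $1/(\psi(s)-\mu)$, so the residue is computed exactly as in Lemma \ref{lemma:residue} via the L'H\^opital limit \eqref{eq:hospital}, producing $\overline{\mathfrak{H}}_F(-\psi^{-1}(\mu))\,\psi'(\psi^{-1}(\mu))^{-\frac12}$, the first term of $\mathfrak{F}_{F}(\mu)$. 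At each $\gamma_\ell$ the factor $\overline{\mathfrak{H}}_F(-s)/(\psi(s)-\mu)$ is analytic (again using $\gamma_\ell\neq\psi^{-1}(\mu)$ and that $\psi,\psi'(\cdot)^{\frac12}$ have different poles), whence the residue equals $\overline{\mathfrak{H}}_F(-\gamma_\ell)\,\mathsf{res}[\psi'(s)^{\frac12},\gamma_\ell]/(\psi(\gamma_\ell)-\mu)$, which is the $\ell$-th summand of \eqref{eq:mathfrakf}. Summation gives $\langle F,\tfrac{1}{\cdot-\mu}\rangle_{\mathcal{H}_2(\bar{\mathbb{A}}^{\mathsf{c}})}=\mathfrak{F}_{F}(\mu)$.

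For the right identity I see two routes. The clean one uses $\tfrac{1}{(\cdot-\mu)^2}=\partial_\mu\tfrac{1}{\cdot-\mu}$: since the $\mathcal{H}_2(\bar{\mathbb{A}}^{\mathsf{c}})$ inner product is $\mathbb{C}$-linear in its second argument and $\mu\mapsto\tfrac{1}{\cdot-\mu}$ is a holomorphic Hilbert-space-valued map whose singularities stay bounded away from the imaginary axis, one may differentiate the first identity under the inner product to obtain $\langle F,\tfrac{1}{(\cdot-\mu)^2}\rangle_{\mathcal{H}_2(\bar{\mathbb{A}}^{\mathsf{c}})}=\partial_\mu\langle F,\tfrac{1}{\cdot-\mu}\rangle_{\mathcal{H}_2(\bar{\mathbb{A}}^{\mathsf{c}})}=\mathfrak{F}_{F}'(\mu)$. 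Alternatively, and in keeping with the residue style of the paper, I would redo the computation directly with $\mathfrak{H}_g(s)=\psi'(s)^{\frac12}/(\psi(s)-\mu)^2$: now each $\gamma_\ell$ contributes $\overline{\mathfrak{H}}_F(-\gamma_\ell)\,\mathsf{res}[\psi'(s)^{\frac12},\gamma_\ell]/(\psi(\gamma_\ell)-\mu)^2$, which is exactly $\partial_\mu$ of the corresponding term of $\mathfrak{F}_{F}$, while $\psi^{-1}(\mu)$ becomes a double pole.

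The main obstacle is this double-pole residue at $s=\psi^{-1}(\mu)$ in the second route. To handle it I would set $h(s):=\overline{\mathfrak{H}}_F(-s)\psi'(s)^{\frac12}$, analytic near $\sigma:=\psi^{-1}(\mu)$, and expand $\psi(s)-\mu=\psi'(\sigma)(s-\sigma)\bigl[1+\tfrac{\psi''(\sigma)}{2\psi'(\sigma)}(s-\sigma)+\cdots\bigr]$, so that reading off the coefficient of $(s-\sigma)^{-1}$ in $h(s)/(\psi(s)-\mu)^2$ yields the residue $\psi'(\sigma)^{-2}h'(\sigma)-\psi'(\sigma)^{-3}\psi''(\sigma)h(\sigma)$. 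Finally, using the chain rule with $\mathrm{d}\sigma/\mathrm{d}\mu=1/\psi'(\sigma)$, I would verify that this equals $\partial_\mu\bigl[h(\sigma)/\psi'(\sigma)\bigr]=\partial_\mu\bigl[\overline{\mathfrak{H}}_F(-\psi^{-1}(\mu))\psi'(\psi^{-1}(\mu))^{-\frac12}\bigr]$, i.e.\ $\partial_\mu$ of the first term of $\mathfrak{F}_{F}$. Combining this with the $\gamma_\ell$-contributions gives $\mathfrak{F}_{F}'(\mu)$ and confirms agreement with the derivative route, completing the proof.
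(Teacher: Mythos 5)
Your proposal is correct and follows essentially the same route as the paper: both identities are reduced via Lemma \ref{lemma:frakHinnerproduct} to residues at $\psi^{-1}(\mu)$ and at the $\gamma_\ell$, with the simple-pole residue handled by the L'H\^opital limit \eqref{eq:hospital} and the double-pole residue at $\psi^{-1}(\mu)$ yielding exactly the paper's expression $-\overline{\mathfrak{H}}_F'(-\sigma)\psi'(\sigma)^{-3/2}-\tfrac12\overline{\mathfrak{H}}_F(-\sigma)\psi'(\sigma)^{-5/2}\psi''(\sigma)$. Your Laurent-expansion bookkeeping for that double pole is just a tidier packaging of the paper's explicit derivative-limit computation, and the alternative ``differentiate under the inner product'' shortcut, while plausible, is not needed since your residue route already closes the argument.
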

\begin{proof}
	Once again, recall that since $\mu\in\mathbb{A}$, it holds that $\frac{1}{\cdot-\mu}\in\mathcal{H}_2(\bar{\mathbb{A}}^{\mathsf{c}})$ and $\frac{1}{(\cdot-\mu)^2}\in\mathcal{H}_2(\bar{\mathbb{A}}^{\mathsf{c}})$.
	Using Lemma \ref{lemma:frakHinnerproduct}, for the first inner product we obtain
	\begin{equation*}
		\begin{aligned}
				&\left\langle  F,\frac{1}{\cdot-\mu}\right\rangle_{\mathcal{H}_2(\bar{\mathbb{A}}^{\mathsf{c}})}\\
				&\quad\quad= \mathsf{res}\left[\overline{\mathfrak{H}}_F(-s)\frac{\psi'(s)^{\frac{1}{2}}}{\psi(s)-\mu},\psi^{-1}(\mu)\right] + \sum_{\ell=1}^m\mathsf{res}\left[\overline{\mathfrak{H}}_F(-s)\frac{\psi'(s)^{\frac{1}{2}}}{\psi(s)-\mu},\gamma_\ell\right].
		\end{aligned}
	\end{equation*}
	Since $\frac{1}{\cdot-\mu}$ is a rational function, we can apply Lemma \ref{lemma:residue} resulting in
	\begin{equation}\label{eq:intermediateresidue}
		\mathsf{res}\left[ \frac{1}{s-\mu},\psi^{-1}(\mu)\right] = \psi'(\psi^{-1}(\mu))^{-\frac{1}{2}}.
	\end{equation}
	It is worth noting that $F\in\mathcal{H}_2(\bar{\mathbb{A}}^\mathsf{c})$ implies that $\overline{\mathfrak{H}}_F(-\cdot)$ is analytic in $\mathbb{C}_-$. Using (\ref{eq:intermediateresidue}), we now obtain the first assertion in (\ref{eq:frakf}) since
	\begin{equation*}
		\begin{aligned}
			&\left\langle  F,\frac{1}{\cdot-\mu}\right\rangle_{\mathcal{H}_2(\bar{\mathbb{A}}^{\mathsf{c}})}\\
			&\quad\quad= \overline{\mathfrak{H}}_F(-\psi^{-1}(\mu))\mathsf{res}\left[\frac{\psi'(s)^{\frac{1}{2}}}{\psi(s)-\mu},\psi^{-1}(\mu)\right] + \sum_{\ell=1}^{m}\overline{\mathfrak{H}}_F(-\gamma_\ell)\frac{\mathsf{res}\left[\psi'(s)^{\frac{1}{2}},\gamma_\ell\right]}{\psi(\gamma_\ell)-s}\\
			&\quad\quad= \overline{\mathfrak{H}}_F(-\psi^{-1}(\mu))\psi'(\psi^{-1}(\mu))^{-\frac{1}{2}} + \sum_{\ell=1}^{m}\overline{\mathfrak{H}}_F(-\gamma_\ell)\frac{\mathsf{res}\left[\psi'(s)^{\frac{1}{2}},\gamma_\ell\right]}{\psi(\gamma_\ell)-s}= \mathfrak{F}_{F}(\mu).
		\end{aligned}
	\end{equation*}
	From $\psi$ being bijective conformal in $\mathbb{X}$, with the implicit function theorem (see \cite[Theorem I.5.7]{freitag2006}), we conclude that
	\begin{equation*}
		\begin{aligned}
			\frac{\mathrm{d}}{\mathrm{d}s}\psi(\psi^{-1}(s))&=\psi'(\psi^{-1}(s))\left[\psi^{-1}\right]'(s)=1,
		\end{aligned}
	\end{equation*}
	leading to 
	\begin{equation}\label{eq:idpsi}
		\begin{aligned}
			\left[\psi^{-1}\right]'(s)&=\frac{1}{\psi'(\psi^{-1}(s))}.
		\end{aligned}
	\end{equation}
	Moreover, we also know that $\psi^{-1}$ is analytic on the image of $\psi$. 	For the second inner product we have that the poles of $\psi'(\cdot)^{\frac{1}{2}}$ differ from $\psi^{-1}(\mu)$. In addition, because $\overline{\mathfrak{H}}_F(-\cdot)$ is analytic in $\mathbb{C}_-$, the term
	$\overline{\mathfrak{H}}_F(-\cdot)\psi'(\cdot)^{\frac{1}{2}}(\psi(\cdot)-\mu)^{-2}$
	has only a double pole in $\psi^{-1}(\mu)$.
	Hence, for the inner product it follows
	\begin{align}\label{eq:innerproduct2}
			&\left\langle  F,\frac{1}{(\cdot-\mu)^2}\right\rangle_{\mathcal{H}_2(\bar{\mathbb{A}}^{\mathsf{c}})} \nonumber \\ 
			&\quad\quad= \mathsf{res}\left[ \frac{\overline{\mathfrak{H}}_F(-s)}{(\psi(s)-\mu)^2} \psi'(s)^{\frac{1}{2}}, \psi^{-1}(\mu)\right] +\sum_{\ell=1}^{m}\overline{\mathfrak{H}}_F(-\gamma_\ell)\frac{\mathsf{res}\left[\psi'(s)^{\frac{1}{2}},\gamma_\ell\right]}{(\psi(\gamma_\ell)-\mu)^2} \displaybreak[0]  \nonumber \\  
			&\quad\quad= \lim_{s_0\rightarrow \psi^{-1}(\mu)}
			\left[\frac{\mathrm{d}}{\mathrm{d}s}\left(
			 (s-\psi^{-1}(\mu))^2
			\frac{\overline{\mathfrak{H}}_F(-s)}{(\psi(s)-\mu)^2}
			\psi'(s)^{\frac{1}{2}}\right) \right]_{s=s_0} \\
			&\quad\quad\quad\quad\quad +\sum_{\ell=1}^{m}\overline{\mathfrak{H}}_F(-\gamma_\ell)\frac{\mathsf{res}\left[\psi'(s)^{\frac{1}{2}},\gamma_\ell\right]}{(\psi(\gamma_\ell)-\mu)^2} \displaybreak[0] \nonumber \\
			&\quad\quad= \lim_{s_0\rightarrow\psi^{-1}(\mu)}\left[-\overline{\mathfrak{H}}_F'(-s)\left(\frac{s-\psi^{-1}(\mu)}{\psi(s)-\mu}\right)^2 \psi'(s)^{\frac{1}{2}}\right. \nonumber \\
			&\quad\quad\quad\quad\quad 
			+\left.\overline{\mathfrak{H}}_F(-s)\left(\frac{s-\psi^{-1}(\mu)}{\psi(s)-\mu}\right)^2 \frac{\psi'(s)^{-\frac{1}{2}}}{2}\psi''(s)\right. \nonumber \\
			&\quad\quad\quad\quad\quad 
			+ \left.  \overline{\mathfrak{H}}_F(-s)\frac{2\left(s-\psi^{-1}(\mu)\right)}{\psi(s)-\mu}  \right. \nonumber\\
			&\quad\quad \quad \quad \quad
			\left.
			\left(\frac{\psi^{-1}(\mu)-s}{(\psi(s)-\mu)^2}\psi'(s)+\frac{1}{\psi(s)-\mu}\right) \psi'(s)^{\frac{1}{2}}
			\right]_{s=s_0} \nonumber \\
			&\quad\quad\quad\quad\quad +\sum_{\ell=1}^{m}\overline{\mathfrak{H}}_F(-\gamma_\ell)\frac{\mathsf{res}\left[\psi'(s)^{\frac{1}{2}},\gamma_\ell\right]}{(\psi(\gamma_\ell)-\mu)^2}. \nonumber
	\end{align}
	We first focus on the terms 
	\begin{equation}\label{eq:limit1}
		\lim_{s\rightarrow\psi^{-1}(\mu)}\frac{s-\psi^{-1}(\mu)}{\psi(s)-\mu},
	\end{equation}
	and 
	\begin{equation}\label{eq:limit2}
		\lim_{s\rightarrow\psi^{-1}(\mu)}2\frac{s-\psi^{-1}(\mu)}{\psi(s)-\mu}\left(\frac{\psi^{-1}(\mu)-s}{(\psi(s)-\mu)^2}\psi'(s)+\frac{1}{\psi(s)-\mu}\right). 
	\end{equation}
	Again, as in (\ref{eq:hospital}) and (\ref{eq:idpsi}) with L'H\^{o}pital's rule we arrive at
	\begin{equation}\label{eq:hopital1}
		\begin{aligned}
			\lim_{s\rightarrow\psi^{-1}(\mu)}\frac{s-\psi^{-1}(\mu)}{\psi(s)-\mu} &= \left[\psi^{-1}\right]'(\mu).
		\end{aligned}
	\end{equation}
	Similarly, for (\ref{eq:limit2}) we find
	\begin{equation*}
		\begin{aligned}
			&\left[\psi^{-1}\right]'(\mu)\lim_{s\rightarrow\psi^{-1}(\mu)}2\left(\frac{\psi^{-1}(\mu)-s}{(\psi(s)-\mu)^2}\psi'(s)+\frac{1}{\psi(s)-\mu}\right) \\
			&\quad =\left[\psi^{-1}\right]'(\mu)\lim_{s\rightarrow\psi^{-1}(\mu)}\left(\frac{\psi''(s)}{\psi'(s)}\frac{\psi^{-1}(\mu)-s}{\psi(s)-\mu}\right) \\
			&\quad=-\left(\left[\psi^{-1}\right]'(\mu)\right)^2\left(\frac{\psi''(\psi^{-1}(\mu))}{\psi'(\psi^{-1}(\mu))}\right)
		\end{aligned}
	\end{equation*}
	Using these equalities in (\ref{eq:innerproduct2}) gives
	\begin{align*}
			&\left\langle  F,\frac{1}{(\cdot-\mu)^2}\right\rangle_{\mathcal{H}_2(\bar{\mathbb{A}}^{\mathsf{c}})}\\ 
			&\quad\quad= -\overline{\mathfrak{H}}_F'(-\psi^{-1}(\mu))\left(\left[\psi^{-1}\right]'(\mu)\right)^2\psi'(\psi^{-1}(\mu))^{\frac{1}{2}} \\
			&\quad\quad\quad\quad\quad 
			+   \overline{\mathfrak{H}}_F(-\psi^{-1}(\mu))\left(\left[\psi^{-1}\right]'(\mu)\right)^2\frac{\psi'(\psi^{-1}(\mu))^{-\frac{1}{2}}}{2}\psi''(\psi^{-1}(\mu))\\
			&\quad\quad\quad\quad\quad 
			-\overline{\mathfrak{H}}_F(-\psi^{-1}(\mu))    
			\left(\left[\psi^{-1}\right]'(\mu)\right)^2\frac{\psi''(\psi^{-1}(\mu))}{\psi'(\psi^{-1}(\mu))}\psi'(\psi^{-1}(\mu))^{\frac{1}{2}}\\
			&\quad\quad\quad\quad\quad +\sum_{\ell=1}^{m}\overline{\mathfrak{H}}_F(-\gamma_\ell)\frac{\mathsf{res}\left[\psi'(s)^{\frac{1}{2}},\gamma_\ell\right]}{(\psi(\gamma_\ell)-\mu)^2}\\
			&\quad\quad= -\overline{\mathfrak{H}}_F'(-\psi^{-1}(\mu))\left(\left[\psi^{-1}\right]'(\mu)\right)^2\psi'(\psi^{-1}(\mu))^{\frac{1}{2}} \\
			&\quad\quad\quad\quad\quad 
			-\frac{1}{2}\overline{\mathfrak{H}}_F(-\psi^{-1}(\mu))\left(\left[\psi^{-1}\right]'(\mu)\right)^2\psi'(\psi^{-1}(\mu))^{-\frac{1}{2}}\psi''(\psi^{-1}(\mu))\\
			&\quad\quad\quad\quad\quad +\sum_{\ell=1}^{m}\overline{\mathfrak{H}}_F(-\gamma_\ell)\frac{\mathsf{res}\left[\psi'(s)^{\frac{1}{2}},\gamma_\ell\right]}{(\psi(\gamma_\ell)-\mu)^2}.
	\end{align*}
Using (\ref{eq:hopital1}) eventually leads to the second equality of (\ref{eq:frakf})
\begin{align*}
		&\left\langle  F,\frac{1}{(\cdot-\mu)^2}\right\rangle_{\mathcal{H}_2(\bar{\mathbb{A}}^{\mathsf{c}})}\\ 
		&\quad\quad= -\overline{\mathfrak{H}}_F'(-\psi^{-1}(\mu))\left[\psi^{-1}\right]'(\mu)\psi'(\psi^{-1}(\mu))^{-\frac{1}{2}}\\
		&\quad\quad\quad\quad\quad 
		-\frac{1}{2}   \overline{\mathfrak{H}}_F(-\psi^{-1}(\mu))\psi'(\psi^{-1}(\mu))^{-\frac{3}{2}}\psi''(\psi^{-1}(\mu))\left[\psi^{-1}\right]'(\mu)\\
		&\quad\quad\quad\quad\quad +\sum_{\ell=1}^m\overline{\mathfrak{H}}_F(-\gamma_\ell)\frac{\mathsf{res}\left[\psi'(s)^{\frac{1}{2}},\gamma_\ell\right]}{(\psi(\gamma_\ell)-\mu)^2}= \mathfrak{F}_{F}'(\mu).
	\end{align*}
\end{proof}
It is worth noting that if $\{\psi(\gamma_\ell)\}_{\ell=1}^{m}\in\bar{\mathbb{A}}^{\mathsf{c}}$ and $\psi'(\cdot)^{\frac{1}{2}}$ is analytic in $\mathbb{X}$, 
then $\mathfrak{F}_F$ is analytic in $\mathbb{A}$. This is due to both summands in (\ref{eq:mathfrakf}) being analytic in $\mathbb{A}$.
Let us now consider the case where $\mathfrak{H}_F$ has the same poles as $F\circ\psi$ as in Corollary \ref{corollary:innerproduct}. Then we have a simplification of Lemma \ref{lemma:frakf}. As a matter of fact, $\mathfrak{F}_{F}(s)$ becomes
\begin{equation*}
	\mathfrak{F}_{F}(s)=\overline{\mathfrak{H}}_F(-\psi^{-1}(s))\psi'(\psi^{-1}(s))^{-\frac{1}{2}},
\end{equation*}
leading to the following corollary.
\begin{corollary}\label{corollary:practical}
	Consider $ F\in\mathcal{H}_2(\bar{\mathbb{A}}^{\mathsf{c}})$ 
	and $\mu\in\mathbb{A}$. If $\psi$ and $\psi'(\cdot)^{\frac{1}{2}}$ have the same poles, then
	\begin{equation*}
		\left\langle  F, \frac{1}{\cdot-\mu} \right\rangle_{\mathcal{H}_2(\bar{\mathbb{A}}^{\mathsf{c}})} 
		=    \overline{\mathfrak{H}}_F(-\psi^{-1}(\mu))\psi'(\psi^{-1}(\mu))^{-\frac{1}{2}},
	\end{equation*}
and 
	\begin{equation*}
		\left\langle  F, \frac{1}{(\cdot-\mu)^2} \right\rangle_{\mathcal{H}_2(\bar{\mathbb{A}}^{\mathsf{c}})} 
		=  \left[\frac{\mathrm{d}}{\mathrm{d}s}\overline{\mathfrak{H}}_F(-\psi^{-1}(s))\psi'(\psi^{-1}(s))^{-\frac{1}{2}}\right]_{s=\mu}.
	\end{equation*}
\end{corollary}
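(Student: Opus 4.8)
The plan is to read off both identities as the shared-poles specialization of Lemma~\ref{lemma:frakf}. That lemma gives $\left\langle F,\frac{1}{\cdot-\mu}\right\rangle_{\mathcal{H}_2(\bar{\mathbb{A}}^{\mathsf{c}})}=\mathfrak{F}_F(\mu)$ and $\left\langle F,\frac{1}{(\cdot-\mu)^2}\right\rangle_{\mathcal{H}_2(\bar{\mathbb{A}}^{\mathsf{c}})}=\mathfrak{F}_F'(\mu)$ with $\mathfrak{F}_F$ as in \eqref{eq:mathfrakf}, so it suffices to show that under the hypothesis ``$\psi$ and $\psi'(\cdot)^{\frac{1}{2}}$ have the same poles'' the correction sum in \eqref{eq:mathfrakf} drops out, leaving $\mathfrak{F}_F(s)=\overline{\mathfrak{H}}_F(-\psi^{-1}(s))\psi'(\psi^{-1}(s))^{-\frac{1}{2}}$. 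Once this reduction is in place, the first identity is just $\mathfrak{F}_F(\mu)$, and the second is obtained by differentiating and evaluating at $s=\mu$, which is exactly the claimed $\left[\frac{\mathrm{d}}{\mathrm{d}s}\overline{\mathfrak{H}}_F(-\psi^{-1}(s))\psi'(\psi^{-1}(s))^{-\frac{1}{2}}\right]_{s=\mu}$. As a shortcut, I would note that the first identity alone also follows directly from Corollary~\ref{corollary:innerproduct} applied to $G(\cdot)=\frac{1}{\cdot-\mu}$ (i.e.\ $q=1$, $\mu_1=\mu$, $\nu_1=1$), since $\mu\in\mathbb{A}$ gives $\frac{1}{\cdot-\mu}\in\mathcal{H}_2(\bar{\mathbb{A}}^{\mathsf{c}})$ and the shared-poles assumption is precisely what makes $\mathfrak{H}_{1/(\cdot-\mu)}$ and $G\circ\psi$ share their poles.

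The one step that needs care is the vanishing of the sum, which is the crux of the whole argument. Here I would argue that each $\gamma_\ell$, being a pole of $\psi'(\cdot)^{\frac{1}{2}}$, is now also a (simple) pole of $\psi$, so $\psi(\gamma_\ell)=\infty$ and every summand $\overline{\mathfrak{H}}_F(-\gamma_\ell)\,\mathsf{res}\!\left[\psi'(s)^{\frac{1}{2}},\gamma_\ell\right]/(\psi(\gamma_\ell)-s)$ (and its double-pole analogue with denominator $(\psi(\gamma_\ell)-\mu)^2$) is zero. The rigorous reformulation, which I would use to avoid the $\infty$ heuristic, is that $\gamma_\ell$ is not a pole of the integrand appearing in Lemma~\ref{lemma:frakHinnerproduct}: near a simple pole $\gamma_\ell$ of $\psi$ one has $\psi'(s)^{\frac{1}{2}}\sim (s-\gamma_\ell)^{-1}$ while $(\psi(s)-\mu)^{-k}\sim (s-\gamma_\ell)^{k}$, so $\psi'(s)^{\frac{1}{2}}/(\psi(s)-\mu)^{k}$ is analytic (indeed vanishing) at $\gamma_\ell$ for $k=1,2$. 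Hence the pole of $\psi'(\cdot)^{\frac{1}{2}}$ is cancelled by the denominator and contributes no residue, so the residue sum in Lemma~\ref{lemma:frakHinnerproduct} reduces to the single contribution at $\psi^{-1}(\mu)$.

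The main obstacle I anticipate is purely a matter of citation hygiene: Lemma~\ref{lemma:frakf} was stated under the assumption that $\psi$ and $\psi'(\cdot)^{\frac{1}{2}}$ have \emph{distinct} poles, so strictly speaking it cannot be quoted verbatim in the shared-poles regime. I would therefore re-run the residue computation of its proof in the present setting rather than invoke the lemma as a black box. This is routine, since only the double pole at $\psi^{-1}(\mu)$ survives: the same L'H\^{o}pital and implicit-function-theorem manipulations as in \eqref{eq:hospital}--\eqref{eq:hopital1}, now with the $\gamma_\ell$-sum absent, reproduce exactly $\overline{\mathfrak{H}}_F(-\psi^{-1}(\mu))\psi'(\psi^{-1}(\mu))^{-\frac{1}{2}}$ and its derivative. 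I expect no further difficulty, and the argument in fact simplifies relative to Lemma~\ref{lemma:frakf} precisely because the correction terms are gone.
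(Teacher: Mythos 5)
Your proposal is correct and follows essentially the same route as the paper, which simply observes that in the shared-poles case the correction sum in \eqref{eq:mathfrakf} drops out so that $\mathfrak{F}_F(s)=\overline{\mathfrak{H}}_F(-\psi^{-1}(s))\psi'(\psi^{-1}(s))^{-\frac{1}{2}}$ and the corollary reduces to Lemma~\ref{lemma:frakf}. You are in fact more careful than the paper, which asserts the simplification without justification: your order-counting argument that a simple pole of $\psi$ at $\gamma_\ell$ makes $\psi'(s)^{\frac{1}{2}}/(\psi(s)-\mu)^k$ analytic there (though for $k=1$ the limit is generically a nonzero constant rather than zero, a harmless slip) correctly explains why the $\gamma_\ell$ contribute no residues, and your point about re-running the residue computation rather than quoting Lemma~\ref{lemma:frakf} verbatim is a legitimate refinement.
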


\section{Optimal \texorpdfstring{$\mathcal{H}_2(\bar{\mathbb{A}}^{\mathsf{c}})$}{TEXT} model reduction}\label{sec:optconditions}
The $\mathcal{H}_2$ optimal model order reduction framework seeks a solution for the optimization problem in (\ref{eq:H2optprob}). 
We now introduce a similar approach based on the $\mathcal{H}_2(\bar{\mathbb{A}}^{\mathsf{c}})$ space described in Definition \ref{def:H2A}. 
The objective is to find an optimal reduced order model (\ref{eq:rom}) that minimizes the error norm
\begin{equation}\label{eq:H2AbarOptProb}
\min_{\tilde{H}}\|H-\tilde{H}\|_{\mathcal{H}_2(\bar{\mathbb{A}}^{\mathsf{c}})}.
\end{equation}
By definition we can recast this optimization problem as
\begin{equation}\label{eq:aux}
	\min_{\tilde{H}}\|\mathfrak{H}_H-\mathfrak{H}_{\tilde{H}}\|_{\mathcal{H}_2(\mathbb{C}_+)}.
\end{equation}
However, note that it is not possible to perform classical $\mathcal{H}_2(\mathbb{C}_+)$ optimal model reduction for $\mathfrak{H}_H$, the reason being that the reduced model $\mathfrak{H}_{\tilde{H}}$ would have to possess a very particular structure. In fact, \eqref{eq:aux} presents a structure-preserving $\mathcal{H}_2$ model reduction problem that similarly arises for the frequency-weighted case, see, once again, \cite{anic2013}. Contrary to the latter work, under certain assumptions on $\psi,\psi'$ and $\psi^{-1}$, we will be able to derive optimality conditions which allow for a numerical approach based on a slight modification of IRKA.

Of course, the nonconvexity of (\ref{eq:H2AbarOptProb}) makes the computation of a global minimizer very challenging if not impossible. 
For this reason, one seeks \textit{local minimizers}, making the task more feasible.  Consequently, let us consider the transfer function $\widehat{H}$ of the reduced model in (\ref{eq:rom}) to be a local minimizer of (\ref{eq:H2AbarOptProb}).
The following theorem states the interpolation conditions that $\widehat{H}$ needs to fulfill. The proof follows a perturbation argument that has previously been used  in, e.g.,  \cite{gugercin2008,anic2013,antoulas2020}. In particular, consider a transfer function $\widehat{H}^{(\varepsilon)}$ with $\|\widehat{H}-\widehat{H}^{(\varepsilon)}\|_{\mathcal{H}_2(\bar{\mathbb{A}}^{\mathsf{c}})}=\mathcal{O}(\varepsilon)$ so that the local minimality of $\widehat{H}$ implies
\begin{equation}\label{eq:perturb}
	\|H-\widehat{H}\|_{\mathcal{H}_2(\bar{\mathbb{A}}^{\mathsf{c}})}\leq \|H-\widehat{H}^{(\varepsilon)}\|_{\mathcal{H}_2(\bar{\mathbb{A}}^{\mathsf{c}})}.
\end{equation}
\begin{theorem}\label{theorem:main}
	Let $\mathbb{A}$ be a non-empty connected open set and $H\in\mathcal{H}_2(\bar{\mathbb{A}}^{\mathsf{c}})$. Consider $\psi$ as in Assumption \ref{assumption:1} and let $\psi'(\cdot)^{\frac{1}{2}}$ be analytic in $\mathbb{X}$ and have the poles $\{\gamma_\ell\}_{\ell=1}^{m}$ such that $\{\psi(\gamma_\ell)\}_{\ell=1}^{m}\in\bar{\mathbb{A}}^{\mathsf{c}}$. Let $\widehat{H}\in\mathcal{H}_2(\bar{\mathbb{A}}^{\mathsf{c}})$ be a local minimizer of (\ref{eq:H2AbarOptProb}) with poles $\{\widehat{\lambda}_j\}_{j=1}^r\in\mathbb{A}$. Let $\{\psi^{-1}(\widehat{\lambda}_j)\}_{j=1}^r$ be different from the poles of $\psi'(\cdot)^{\frac{1}{2}}$ and $\psi$. Then the following interpolation conditions hold for $j=1,\dots,r$
	\begin{equation}\label{eq:interpolationcondH2A}
		\begin{aligned}
			\mathfrak{F}_{\widehat{H}}(\widehat{\lambda}_j) = \mathfrak{F}_{H}(\widehat{\lambda}_j)\quad \text{and}\quad \mathfrak{F}_{\widehat{H}}'(\widehat{\lambda}_j) = \mathfrak{F}_{H}'(\widehat{\lambda}_j).
		\end{aligned}
	\end{equation} 
\end{theorem}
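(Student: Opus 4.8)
The plan is to use a first-order perturbation argument, perturbing separately the residues and the pole locations of $\widehat{H}$ and, in each case, exploiting local minimality through \eqref{eq:perturb} to force the first-order variation of the squared error to vanish. The resulting vanishing inner products are then translated into the interpolation conditions \eqref{eq:interpolationcondH2A} by means of Lemma \ref{lemma:frakf}. First I would write the minimizer in pole-residue form $\widehat{H}(s)=\sum_{j=1}^r\frac{\widehat{\phi}_j}{s-\widehat{\lambda}_j}$ with simple poles $\widehat{\lambda}_j\in\mathbb{A}$, assuming without loss of generality that every residue $\widehat{\phi}_j$ is nonzero (a vanishing residue would lower the order). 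Setting $E:=H-\widehat{H}\in\mathcal{H}_2(\bar{\mathbb{A}}^{\mathsf{c}})$ and using that the inner product of Definition \ref{def:H2A} is conjugate-symmetric, conjugate-linear in its first and linear in its second argument, any admissible perturbation $\widehat{H}^{(\varepsilon)}$ gives
\begin{equation*}
	\|H-\widehat{H}^{(\varepsilon)}\|_{\mathcal{H}_2(\bar{\mathbb{A}}^{\mathsf{c}})}^2-\|E\|_{\mathcal{H}_2(\bar{\mathbb{A}}^{\mathsf{c}})}^2=-2\re\left\langle E,\widehat{H}^{(\varepsilon)}-\widehat{H}\right\rangle_{\mathcal{H}_2(\bar{\mathbb{A}}^{\mathsf{c}})}+\left\|\widehat{H}^{(\varepsilon)}-\widehat{H}\right\|_{\mathcal{H}_2(\bar{\mathbb{A}}^{\mathsf{c}})}^2.
\end{equation*}
Since \eqref{eq:perturb} forces the right-hand side to be nonnegative for all small $\varepsilon\in\mathbb{C}$ while the last term is $\mathcal{O}(|\varepsilon|^2)$, the first-order term must vanish; because $\re[\varepsilon z]=0$ for every $\varepsilon\in\mathbb{C}$ only if $z=0$, each choice of perturbation direction will yield a vanishing inner product of $E$ against a specific rational function.

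For the value conditions I would fix $k$ and take the residue perturbation $\widehat{H}^{(\varepsilon)}=\widehat{H}+\frac{\varepsilon}{\cdot-\widehat{\lambda}_k}$. As $\widehat{\lambda}_k\in\mathbb{A}$ we have $\frac{1}{\cdot-\widehat{\lambda}_k}\in\mathcal{H}_2(\bar{\mathbb{A}}^{\mathsf{c}})$ and $\|\widehat{H}^{(\varepsilon)}-\widehat{H}\|_{\mathcal{H}_2(\bar{\mathbb{A}}^{\mathsf{c}})}=\mathcal{O}(\varepsilon)$, so the first-order term above equals $-2\re[\varepsilon\langle E,\frac{1}{\cdot-\widehat{\lambda}_k}\rangle_{\mathcal{H}_2(\bar{\mathbb{A}}^{\mathsf{c}})}]$ and local minimality forces $\langle H-\widehat{H},\frac{1}{\cdot-\widehat{\lambda}_k}\rangle_{\mathcal{H}_2(\bar{\mathbb{A}}^{\mathsf{c}})}=0$. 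Applying the first identity of Lemma \ref{lemma:frakf} with $\mu=\widehat{\lambda}_k$ (licit precisely because $\psi^{-1}(\widehat{\lambda}_k)$ is assumed distinct from the poles of $\psi'(\cdot)^{\frac{1}{2}}$ and $\psi$) yields $\mathfrak{F}_{\widehat{H}}(\widehat{\lambda}_k)=\mathfrak{F}_{H}(\widehat{\lambda}_k)$.

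For the derivative conditions I would instead shift the $k$-th pole, $\widehat{H}^{(\varepsilon)}=\sum_{j\neq k}\frac{\widehat{\phi}_j}{\cdot-\widehat{\lambda}_j}+\frac{\widehat{\phi}_k}{\cdot-(\widehat{\lambda}_k+\varepsilon)}$. Since $\mathbb{A}$ is open, $\widehat{\lambda}_k+\varepsilon\in\mathbb{A}$ for $|\varepsilon|$ small, so $\widehat{H}^{(\varepsilon)}\in\mathcal{H}_2(\bar{\mathbb{A}}^{\mathsf{c}})$, and the expansion $\frac{1}{\cdot-\widehat{\lambda}_k-\varepsilon}=\frac{1}{\cdot-\widehat{\lambda}_k}+\frac{\varepsilon}{(\cdot-\widehat{\lambda}_k)^2}+\mathcal{O}(\varepsilon^2)$ gives $\widehat{H}^{(\varepsilon)}-\widehat{H}=\widehat{\phi}_k\frac{\varepsilon}{(\cdot-\widehat{\lambda}_k)^2}+\mathcal{O}(\varepsilon^2)$ in $\mathcal{H}_2(\bar{\mathbb{A}}^{\mathsf{c}})$. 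The first-order term then reads $-2\re[\varepsilon\widehat{\phi}_k\langle E,\frac{1}{(\cdot-\widehat{\lambda}_k)^2}\rangle_{\mathcal{H}_2(\bar{\mathbb{A}}^{\mathsf{c}})}]$; as $\widehat{\phi}_k\neq0$, minimality forces $\langle H-\widehat{H},\frac{1}{(\cdot-\widehat{\lambda}_k)^2}\rangle_{\mathcal{H}_2(\bar{\mathbb{A}}^{\mathsf{c}})}=0$, and the second identity of Lemma \ref{lemma:frakf} gives $\mathfrak{F}_{\widehat{H}}'(\widehat{\lambda}_k)=\mathfrak{F}_{H}'(\widehat{\lambda}_k)$. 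Letting $k$ range over $1,\dots,r$ establishes \eqref{eq:interpolationcondH2A}.

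The main obstacle will be making the pole-shift step rigorous: one must verify that the $\mathcal{O}(\varepsilon^2)$ remainder of $\widehat{H}^{(\varepsilon)}-\widehat{H}$ is genuinely $o(\varepsilon)$ in the $\mathcal{H}_2(\bar{\mathbb{A}}^{\mathsf{c}})$-norm, uniformly in a neighbourhood of $\varepsilon=0$, which hinges on the norm remaining finite and varying continuously under the conformal pull-back $\mathfrak{H}_{\cdot}$ as long as all poles stay inside $\mathbb{A}$. The remaining points are routine given the hypotheses: the sesquilinear bookkeeping in the error expansion, the elementary fact that a real-part functional that is sign-definite to first order must have vanishing linear part, the nonvanishing of the residues, and the hypothesis $\{\psi(\gamma_\ell)\}_{\ell=1}^m\in\bar{\mathbb{A}}^{\mathsf{c}}$ which guarantees that $\mathfrak{F}_{H}$ and $\mathfrak{F}_{\widehat{H}}$ are analytic in $\mathbb{A}$ so that both conditions in \eqref{eq:interpolationcondH2A} are well defined at each $\widehat{\lambda}_k$.
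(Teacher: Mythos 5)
Your proposal is correct and follows essentially the same perturbation argument as the paper: perturb the residues to obtain the value conditions and the pole locations to obtain the derivative conditions, with local minimality annihilating the first-order term and Lemma \ref{lemma:frakf} translating the resulting vanishing inner products into \eqref{eq:interpolationcondH2A}. The only (harmless) deviation is in the derivative step, where you Taylor-expand the perturbed pole term inside the Hilbert space and invoke the second identity of Lemma \ref{lemma:frakf} directly, whereas the paper evaluates the inner product via the first identity at the shifted point $\widehat{\lambda}_p+\varepsilon e^{\mathrm{i}\theta_2}$ and then Taylor-expands $\mathfrak{F}_{H}-\mathfrak{F}_{\widehat{H}}$ around $\widehat{\lambda}_p$ using its analyticity in $\mathbb{A}$.
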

\begin{proof}	
For the first condition, we consider a perturbation of the local optimum in the $p$-th residue such that 
\begin{equation*}
	\widehat{H}^{(\varepsilon)}(s) =\frac{\widehat{\phi}_p+\varepsilon e^{\mathrm{i}\theta_1}}{s-\widehat{\lambda}_p}+ \sum_{j\neq p} \frac{\widehat{\phi}_j}{s-\widehat{\lambda}_j},
\end{equation*}	
with $\varepsilon>0$ small and $\theta_1$ arbitrary.
We then get from (\ref{eq:perturb})
\begin{equation*}
	\begin{aligned}
		\|H-\widehat{H}\|^2_{\mathcal{H}_2(\bar{\mathbb{A}}^{\mathsf{c}})}&\leq \|H-\widehat{H}^{(\varepsilon)}\|_{\mathcal{H}_2(\bar{\mathbb{A}}^{\mathsf{c}})}^2
		= \|H - \widehat{H} +\widehat{H}-\widehat{H}^{(\varepsilon)}\|_{\mathcal{H}_2(\bar{\mathbb{A}}^{\mathsf{c}})}^2\\
		&= \|H - \widehat{H}\|_{\mathcal{H}_2(\bar{\mathbb{A}}^{\mathsf{c}})}^2 +2\text{Re}\left\{\left\langle H - \widehat{H}, \widehat{H}-\widehat{H}^{(\varepsilon)} \right\rangle_{\mathcal{H}_2(\bar{\mathbb{A}}^{\mathsf{c}})}\right\}\\
		& \quad\quad\quad + \|\widehat{H}-\widehat{H}^{(\varepsilon)}\|_{\mathcal{H}_2(\bar{\mathbb{A}}^{\mathsf{c}})}^2
	\end{aligned}
\end{equation*}
which leads to
\begin{equation}\label{eq:norminequality}
	\begin{aligned}
		0&\leq 2\text{Re}\left\{\left\langle H - \widehat{H}, \widehat{H}-\widehat{H}^{(\varepsilon)} \right\rangle_{\mathcal{H}_2(\bar{\mathbb{A}}^{\mathsf{c}})}\right\}+ \|\widehat{H}-\widehat{H}^{(\varepsilon)}\|_{\mathcal{H}_2(\bar{\mathbb{A}}^{\mathsf{c}})}^2.
	\end{aligned}
\end{equation}
We use Lemma \ref{lemma:frakf} and (\ref{eq:mathfrakf}) to evaluate the two terms in (\ref{eq:norminequality}) 
\begin{equation}
	\begin{aligned}
		\left\langle H - \widehat{H}, \widehat{H}-\widehat{H}^{(\varepsilon)} \right\rangle_{\mathcal{H}_2(\bar{\mathbb{A}}^{\mathsf{c}})} &=  \left\langle H - \widehat{H}, \frac{-\varepsilon e^{\mathrm{i}\theta_1}}{s-\widehat\lambda_p} \right\rangle_{\mathcal{H}_2(\bar{\mathbb{A}}^{\mathsf{c}})} \\ 
		&= -\varepsilon e^{\mathrm{i}\theta_1} \left(\mathfrak{F}_{H}(\widehat{\lambda}_p)-\mathfrak{F}_{\widehat{H}}(\widehat{\lambda}_p) \right),
	\end{aligned}
	\label{eq:innerproduct}
\end{equation}
and
\begin{equation*}
	\begin{aligned}
		\|\widehat{H}-\widehat{H}^{(\varepsilon)}\|_{\mathcal{H}_2(\mathbb{A})}^2 &= \left\| \frac{-\varepsilon e^{\mathrm{i}\theta_1}}{s-\widehat\lambda_p} \right\|_{\mathcal{H}_2(\bar{\mathbb{A}}^{\mathsf{c}})}^2
		= \mathcal{O}(\varepsilon^2),\quad \text{for } \varepsilon\rightarrow 0.
	\end{aligned}
\end{equation*}
Consider $\theta_1$ chosen such that $e^{\mathrm{i}\theta_1} \left(\mathfrak{F}_{H}(\widehat{\lambda}_p)-\mathfrak{F}_{\widehat{H}}(\widehat{\lambda}_p) \right)$ is positive and real valued, i.e. $\theta_1 =- \mathsf{arg}\left(\mathfrak{F}_{H}(\widehat{\lambda}_p)-\mathfrak{F}_{\widehat{H}}(\widehat{\lambda}_p) \right)$.  
This implies that (\ref{eq:perturb}) becomes 
\begin{equation*}
	0\leq2 \left\lvert\mathfrak{F}_{H}(\widehat{\lambda}_p)-\mathfrak{F}_{\widehat{H}}(\widehat{\lambda}_p)\right\lvert\leq\mathcal{O}(\varepsilon).
\end{equation*}
As this holds for arbitrary $\varepsilon >0$, in the limit we obtain  
\begin{equation}\label{eq:firstoptimalityconditionsH2barAc}
	\mathfrak{F}_{H}(\widehat{\lambda}_p)=\mathfrak{F}_{\widehat{H}}(\widehat{\lambda}_p),
\end{equation}
resulting in the first part of (\ref{eq:interpolationcondH2A}).
The second interpolation condition is proven as above but with a perturbation of the $p$-th pole, i.e.,
\begin{equation*}
	\widehat{H}^{(\varepsilon)}(s) =\frac{\widehat{\phi}_p}{s-(\widehat{\lambda}_p+\varepsilon e^{\mathrm{i}\theta_2})}+ \sum_{j\neq p} \frac{\widehat{\phi}_j}{s-\widehat{\lambda}_j}.
\end{equation*}
Let us first expand the inner product
\begin{align*}
	\left\langle H - \widehat{H}, \widehat{H}-\widehat{H}^{(\varepsilon)} \right\rangle_{\mathcal{H}_2(\bar{\mathbb{A}}^{\mathsf{c}})} &= \left\langle H - \widehat{H}, \frac{\widehat{\phi}_p}{s-\widehat{\lambda}_p}- \frac{\widehat{\phi}_p}{s-(\widehat{\lambda}_p+\varepsilon e^{\mathrm{i}\theta_2})} \right\rangle_{\mathcal{H}_2(\bar{\mathbb{A}}^{\mathsf{c}})}\\
	&= \widehat{\phi}_p \left(\mathfrak{F}_{H}(\widehat{\lambda}_p)-\mathfrak{F}_{\widehat{H}}(\widehat{\lambda}_p) \right)\\ 
	&\quad\quad\quad- \widehat{\phi}_p \left(\mathfrak{F}_{H}(\widehat{\lambda}_p+\varepsilon e^{\mathrm{i}\theta_2})-\mathfrak{F}_{\widehat{H}}(\widehat{\lambda}_p+\varepsilon e^{\mathrm{i}\theta_2}) \right)\\
	&= - \widehat{\phi}_p \left(\mathfrak{F}_{H}(\widehat{\lambda}_p+\varepsilon e^{\mathrm{i}\theta_2})-\mathfrak{F}_{\widehat{H}}(\widehat{\lambda}_p+\varepsilon e^{\mathrm{i}\theta_2}) \right)
\end{align*}
where in the last equality we have used the first optimality condition (\ref{eq:firstoptimalityconditionsH2barAc}).
Because $\{\psi(\gamma_\ell)\}_{\ell=1}^{m}\in\bar{\mathbb{A}}^{\mathsf{c}}$ and $\psi'(\cdot)^{\frac{1}{2}}$ is analytic in $\mathbb{X}$, we have that both $\mathfrak{F}_H$ and $\mathfrak{F}_{\widehat{H}}$ are analytic in $\mathbb{A}$. For $\varepsilon\rightarrow0$, we consider the Taylor expansion of the two functions around $\widehat{\lambda}_p$ (see also \cite[Theorem 5.1.1]{antoulas2020}). This leads to
\begin{align*}
	\left\langle H - \widehat{H}, \widehat{H}-\widehat{H}^{(\varepsilon)} \right\rangle_{\mathcal{H}_2(\bar{\mathbb{A}}^{\mathsf{c}})} &= - \varepsilon e^{\mathrm{i}\theta_2}\widehat{\phi}_p \left(\mathfrak{F}_{H}'(\widehat{\lambda}_p)-\mathfrak{F}_{\widehat{H}}'(\widehat{\lambda}_p) \right).
\end{align*}	
For $\varepsilon\rightarrow0$ we also have that $\|\widehat{H}-\widehat{H}^{(\varepsilon)}\|_{\mathcal{H}_2(\mathbb{A})}^2=\mathcal{O}(\varepsilon^2)$. Inserting these equalities in (\ref{eq:norminequality}) results in 
\begin{equation}\label{eq:inequalityderivative}
	0\leq -2\varepsilon\text{Re}\left\{ e^{\mathrm{i}\theta_2}\widehat{\phi}_p \left(\mathfrak{F}'_{H}(\widehat{\lambda}_p)-\mathfrak{F}'_{\widehat{H}}(\widehat{\lambda}_p) \right)\right\} + \mathcal{O}(\varepsilon^2).
\end{equation}
Choosing $\theta_2$ such that $e^{\mathrm{i}\theta_2}\widehat{\phi}_p (\mathfrak{F}'_{H}(\widehat{\lambda}_p)-\mathfrak{F}'_{\widehat{H}}(\widehat{\lambda}_p))$ is positive and real valued gives us
\begin{equation*}
	\mathfrak{F}_{H}'(\widehat{\lambda}_p)=\mathfrak{F}_{\widehat{H}}'(\widehat{\lambda}_p),
\end{equation*}
obtaining the second equality of (\ref{eq:interpolationcondH2A}).
This is then repeated for $p=1,\dots,r$.
\end{proof}

Theorem \ref{theorem:main} gives us necessary optimality conditions that must hold for $\widehat{H}\in\mathcal{H}_2(\bar{\mathbb{A}}^{\mathsf{c}})$ to solve (\ref{eq:H2AbarOptProb}). 
Due to the particular structure of $\mathfrak{F}$, designing an algorithm similar to IRKA that computes a reduced model complying with \eqref{eq:interpolationcondH2A}, becomes a difficult task. Indeed, while rational approximation of transfer functions of standard LTI systems is well-known to be realizable by a Petrov-Galerkin projection onto rational Krylov subspaces, such an approach is no longer viable for the specific structure of $\mathfrak{F}$ in \eqref{eq:frakf} and it is not clear how to ensure interpolation.
Similar challenges were discussed for the weighted $\mathcal{H}_2$ model reduction problem in \cite{breiten2015,anic2013}. 
\begin{corollary}\label{corollary:practicaltheorem}
	Let the assumptions in Theorem \ref{theorem:main} hold. Let $\psi$ and $\psi'(\cdot)^{\frac{1}{2}}$ have the same poles. Assume that in neighborhoods of the points  $-\psi^{-1}(\widehat{\lambda}_j)^*$ the derivative $\overline{\psi'}$ exists and is not zero for $j=1,\dots,r$. 
	Then the optimality conditions in (\ref{eq:interpolationcondH2A}) become 
	\begin{equation}\label{eq:interpolationcondH2Apractical}
		\begin{aligned}
			\widehat{H}(\varphi(\widehat{\lambda}_j)) = H(\varphi(\widehat{\lambda}_j))\quad \text{and}\quad \widehat{H}'(\varphi(\widehat{\lambda}_j)) = H'(\varphi(\widehat{\lambda}_j)),
		\end{aligned}
	\end{equation} 
	where $\varphi(s)=\overline{\psi}(-\psi^{-1}(s))^*$.
\end{corollary}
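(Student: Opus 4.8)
The plan is to reduce the abstract conditions $\mathfrak{F}_{\widehat{H}}(\widehat{\lambda}_j)=\mathfrak{F}_{H}(\widehat{\lambda}_j)$ and $\mathfrak{F}_{\widehat{H}}'(\widehat{\lambda}_j)=\mathfrak{F}_{H}'(\widehat{\lambda}_j)$ furnished by Theorem \ref{theorem:main} to the explicit conditions \eqref{eq:interpolationcondH2Apractical}, by writing $\mathfrak{F}_F$ in closed form and carefully tracking the conjugation operator $\overline{(\cdot)}$. First I would use the hypothesis that $\psi$ and $\psi'(\cdot)^{\frac{1}{2}}$ share their poles so that, by the simplification of $\mathfrak{F}_F$ recorded just before the statement of this corollary, the sum in \eqref{eq:mathfrakf} drops out and $\mathfrak{F}_F(s)=\overline{\mathfrak{H}}_F(-\psi^{-1}(s))\,\psi'(\psi^{-1}(s))^{-\frac{1}{2}}$. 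Inserting the expansion $\overline{\mathfrak{H}}_F(-s)=\overline{F}(\overline{\psi}(-s))\,\overline{\psi'}(-s)^{\frac{1}{2}}$ from Lemma \ref{lemma:frakHinnerproduct}, I would factor
\[
\mathfrak{F}_F(s)=\overline{F}(\eta(s))\,\beta(s),\quad \eta(s):=\overline{\psi}(-\psi^{-1}(s)),\quad \beta(s):=\overline{\psi'}(-\psi^{-1}(s))^{\frac{1}{2}}\psi'(\psi^{-1}(s))^{-\frac{1}{2}}.
\]
Since $\overline{F}(z)=F(z^*)^*$ and $\varphi(s)=\eta(s)^*$, this reads $\mathfrak{F}_F(s)=F(\varphi(s))^*\,\beta(s)$.

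For the zeroth-order condition, evaluating at $s=\widehat{\lambda}_j$ gives $\overline{\widehat{H}}(\eta(\widehat{\lambda}_j))\beta(\widehat{\lambda}_j)=\overline{H}(\eta(\widehat{\lambda}_j))\beta(\widehat{\lambda}_j)$. The key point is that $\beta(\widehat{\lambda}_j)\neq 0$: the factor $\psi'(\psi^{-1}(\widehat{\lambda}_j))^{-\frac{1}{2}}$ is finite and nonzero because $\psi$ is conformal on $\mathbb{X}$ (Assumption \ref{assumption:1}, so $\psi'$ does not vanish there), while $\overline{\psi'}(-\psi^{-1}(\widehat{\lambda}_j))^{\frac{1}{2}}\neq 0$ is precisely the nonvanishing hypothesis on $\overline{\psi'}$. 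Cancelling $\beta(\widehat{\lambda}_j)$ and conjugating then yields $\widehat{H}(\varphi(\widehat{\lambda}_j))=H(\varphi(\widehat{\lambda}_j))$, the first half of \eqref{eq:interpolationcondH2Apractical}.

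For the first-order condition, I would differentiate the factored form, using that $\overline{F}$ is analytic with $(\overline{F})'=\overline{F'}$, to obtain $\mathfrak{F}_F'(s)=\overline{F'}(\eta(s))\eta'(s)\beta(s)+\overline{F}(\eta(s))\beta'(s)$. Subtracting the identity $\mathfrak{F}_{\widehat{H}}'(\widehat{\lambda}_j)=\mathfrak{F}_{H}'(\widehat{\lambda}_j)$, the terms $\overline{F}(\eta)\beta'$ cancel thanks to the already-established $\overline{\widehat{H}}(\eta(\widehat{\lambda}_j))=\overline{H}(\eta(\widehat{\lambda}_j))$, leaving $(\overline{\widehat{H}'}(\eta(\widehat{\lambda}_j))-\overline{H'}(\eta(\widehat{\lambda}_j)))\,\eta'(\widehat{\lambda}_j)\beta(\widehat{\lambda}_j)=0$. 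Here $\eta'(\widehat{\lambda}_j)=-\overline{\psi'}(-\psi^{-1}(\widehat{\lambda}_j))\,[\psi^{-1}]'(\widehat{\lambda}_j)$ by the chain rule together with $[\psi^{-1}]'(s)=1/\psi'(\psi^{-1}(s))$ (cf.\ \eqref{eq:idpsi}), which is nonzero by the same nonvanishing argument as above; hence $\overline{\widehat{H}'}(\eta(\widehat{\lambda}_j))=\overline{H'}(\eta(\widehat{\lambda}_j))$. Converting back through $\overline{F'}(\eta)=F'(\eta^*)^*=F'(\varphi)^*$ delivers $\widehat{H}'(\varphi(\widehat{\lambda}_j))=H'(\varphi(\widehat{\lambda}_j))$, and repeating over $j=1,\dots,r$ finishes the proof.

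The main obstacle I anticipate is bookkeeping the conjugation operator $\overline{(\cdot)}$ consistently: one must repeatedly invoke $\overline{F}(z)=F(z^*)^*$, $(\overline{F})'=\overline{F'}$ and $\eta(s)^*=\varphi(s)$ to pass between the bar-functions appearing in $\mathfrak{F}$ and the plain evaluations of $\widehat{H}$, $H$ and their derivatives at $\varphi(\widehat{\lambda}_j)$, while simultaneously verifying that $\beta(\widehat{\lambda}_j)$ and $\eta'(\widehat{\lambda}_j)$ are nonzero so that the cancellations are legitimate. Analyticity of $\mathfrak{F}_H$ and $\mathfrak{F}_{\widehat{H}}$ near $\widehat{\lambda}_j$, needed to justify the differentiation, is already guaranteed by the hypotheses $\{\psi(\gamma_\ell)\}_{\ell=1}^{m}\in\bar{\mathbb{A}}^{\mathsf{c}}$ and the analyticity of $\psi'(\cdot)^{\frac{1}{2}}$ in $\mathbb{X}$, as observed after Lemma \ref{lemma:frakf}.
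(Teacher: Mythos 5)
Your proposal is correct and follows essentially the same route as the paper: both pass through the simplified form $\mathfrak{F}_F(s)=\overline{\mathfrak{H}}_F(-\psi^{-1}(s))\psi'(\psi^{-1}(s))^{-\frac{1}{2}}$ furnished by Corollary \ref{corollary:practical}, expand $\overline{\mathfrak{H}}_F(-s)=\overline{F}(\overline{\psi}(-s))\overline{\psi'}(-s)^{\frac{1}{2}}$, cancel the nonvanishing square-root factors using the conformality of $\psi$ and the hypothesis on $\overline{\psi'}$, and then obtain the derivative condition from the product rule together with the already-established zeroth-order condition. Your factorization through $\eta$ and $\beta$ merely organizes the same cancellations slightly more explicitly than the paper's computation (\ref{eq:derHfrak}).
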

\begin{proof} The proof directly follows using Corollary \ref{corollary:practical} in Theorem \ref{theorem:main}. Since $\psi$ and $\psi'(\cdot)^{\frac{1}{2}}$ share the same poles with Corollary \ref{corollary:practical}, we simplify (\ref{eq:interpolationcondH2A}) according to
	\begin{equation}\label{eq:interpolationsimpler}
		\overline{\mathfrak{H}}_H(-\psi^{-1}(\widehat{\lambda}_p)) = \overline{\mathfrak{H}}_{\widehat{H}}(-\psi^{-1}(\widehat{\lambda}_p))\; \text{ and } \overline{\mathfrak{H}}_H'(-\psi^{-1}(\widehat{\lambda}_p)) = \overline{\mathfrak{H}}_{\widehat{H}}'(-\psi^{-1}(\widehat{\lambda}_p)).
	\end{equation}
Using (\ref{eq:operatorH}) in the first equality of (\ref{eq:interpolationsimpler}) results in
\begin{equation}\label{eq:interpolationsimpler2}
	\widehat{H}(\varphi(\widehat{\lambda}_p))^*\overline{\psi'}(-\psi^{-1}(\widehat{\lambda}_p))^{\frac{1}{2}} = H(\varphi(\widehat{\lambda}_p))^*\overline{\psi'}(-\psi^{-1}(\widehat{\lambda}_p))^{\frac{1}{2}},
\end{equation}
where $\varphi(s)=\overline{\psi}(-\psi^{-1}(s))^*$.
The assumption made on $\overline{\psi'}$  states that $\psi'(-s^*)^*=\overline{\psi'}(-s)$ is nonzero at $s=\psi^{-1}(\widehat{\lambda}_p)$.
This allows one to simplify (\ref{eq:interpolationsimpler2}) to obtain
\begin{equation*}
		\widehat{H}(\varphi(\widehat{\lambda}_p)) = H(\varphi(\widehat{\lambda}_p)).
\end{equation*} 

Before continuing with the second equality we calculate the following derivative for $s=\psi^{-1}(\widehat{\lambda}_p)$ 
\begin{equation}\label{eq:derHfrak}
	\begin{aligned}
		\frac{\mathrm{d}}{\mathrm{d}s}\overline{\mathfrak{H}}_H(-s)&=\frac{\mathrm{d}}{\mathrm{d}s} \overline{H}(\overline{\psi}(-s))\overline{\psi'}(-s)^{\frac{1}{2}}
		=\frac{\mathrm{d}}{\mathrm{d}s} H(\overline{\psi}(-s)^*)^*\overline{\psi'}(-s)^{\frac{1}{2}}\\
		&=\frac{\mathrm{d}}{\mathrm{d}s} H(\psi(-s^*))^*\overline{\psi'}(-s)^{\frac{1}{2}}\\
		&= -H'(\psi(-s^*))^*\psi'(-s^*)^*\overline{\psi'}(-s)^{\frac{1}{2}} +H(\psi(-s^*))^*\frac{\mathrm{d}}{\mathrm{d}s}\overline{\psi'}(-s)^{\frac{1}{2}}.
	\end{aligned}
\end{equation}
Using (\ref{eq:derHfrak}) in the second equality of (\ref{eq:interpolationsimpler}) and utilizing the first interpolation conditions in (\ref{eq:interpolationcondH2Apractical}) leads to 
\begin{equation*}
	\widehat{H}'(\varphi(\widehat{\lambda}_p)) = H'(\varphi(\widehat{\lambda}_p)),
\end{equation*}
concluding the proof.
\end{proof}
We will adopt the above simplified optimality conditions to design a generalized version of IRKA in Section \ref{sec:irkacom}.

\subsection{Analysis of specific conformal mappings}

In what follows we discuss the validity of Assumption \ref{assumption:1} for some particular conformal maps. In addition, we show that these functions also satisfy the assumptions made in Corollary \ref{corollary:practicaltheorem}. These functions will subsequently be used in the numerical examples of Section \ref{sec:numexp}. 

\subsubsection{Obtaining the \texorpdfstring{$\mathcal{H}_2(\bar{\mathbb{D}}^{\mathsf{c}})$}{TEXT} optimality conditions}\label{sec:H2Doptimalconditions}
We first recover the $\mathcal{H}_2(\bar{\mathbb{D}}^{\mathsf{c}})$ optimality conditions (\ref{eq:h2Doptimalconditions}) 
as a specific case of the $\mathcal{H}_2(\bar{\mathbb{A}}^{\mathsf{c}})$ framework. The meromorphic function $\psi$ that conformally maps $\mathbb{C_-}$ into $\mathbb{D}$ is the following M\"{o}bius transformation \cite{Wegert2012}
\begin{equation}\label{eq:mobiusD}
	\psi(s) = \frac{s+1}{s-1},
\end{equation}
with inverse $\psi^{-1}(s) = \frac{s+1}{s-1}$. 
Let us note that $\psi$ also conformally maps $\mathbb{C}_+\backslash\{1\}$ into $\bar{\mathbb{D}}^{\mathsf{c}}$. Being the derivative of (\ref{eq:mobiusD})
\begin{equation}\label{eq:dmobiusD}
	\psi'(s)
	=\frac{-2}{(s-1)^2},
\end{equation}
we can see that it is not zero in the complex plane, excluding the double pole in 1.
If we consider only $\mathbb{C_-}\backslash\{-1\}$ as domain of $\psi$, instead of the entire left half plane, then the mapping (\ref{eq:mobiusD}) satisfies Assumption \ref{assumption:1}. Let us note that, because of the previews consideration, we now have that $\psi\colon\mathbb{C}_-\backslash\{-1\}\rightarrow\mathbb{D}\backslash\{0\}$ is bijective and conformal. Let us now calculate the square root of (\ref{eq:dmobiusD}) 
\begin{equation*}
	\begin{aligned}\psi'(s)^{\frac{1}{2}}
		&=\frac{\mathrm{i}\sqrt{2}}{s-1}.
	\end{aligned}
\end{equation*}
We have that $\psi'(\cdot)^\frac{1}{2}$ has the same poles as $\psi$ and is non-zero in $\mathbb{C}\backslash\{1\}$. Furthermore, for the full order transfer function 
\begin{equation}\label{eq:tffom}
	H(s) = \sum_{j=1}^n \frac{\phi_j}{s-\lambda_j}, 
\end{equation}
we have that 
\begin{align*}
	\mathfrak{H}_H(s) &= H(\psi(s))\psi'(s)^{\frac{1}{2}}  
	= \sum_{j=1}^n \frac{\mathrm{i}\sqrt{2}\phi_j}{s+1-\lambda_j(s-1)}.
\end{align*}
Hence, the poles of $\mathfrak{H}_H(s)$ are given by  $\psi^{-1}(\lambda_j)=\frac{\lambda_j+1}{\lambda_j-1}$. For $\lambda_j\in\mathbb{D}\backslash\{0\}$ and $\psi\colon\mathbb{C}_-\backslash\{-1\}\rightarrow\mathbb{D}\backslash\{0\}$ being bijective and conformal, we then have that $\psi^{-1}(\lambda_j)\in\mathbb{C}_-\backslash\{-1\}$ for $j=1,\dots,n$.  
In addition, the poles of $\mathfrak{H}_H(s)$ are the same as the ones of
\begin{equation*}
	H(\psi(s)) = \sum_{j=1}^n \frac{\phi_j(s-1)}{s+1-\lambda_j(s-1)}.
\end{equation*}
For this reason we can apply Corollary \ref{corollary:practicaltheorem} to retrieve the optimal interpolation conditions.
Knowing that $\overline{\psi}(-s) = \frac{-s+1}{-s-1}$, we first expand the function
\begin{equation}\label{eq:intpointdisk}
	\begin{aligned}
		\varphi(s) &= \overline{\psi}\left(-\psi^{-1}(s)\right)^*
		= \left(\frac{-\frac{s+1}{s-1}+1}{-\frac{s+1}{s-1}-1}\right)^*
		= \frac{1}{s^*} .
	\end{aligned}
\end{equation}

From Corollary \ref{corollary:practicaltheorem} we have that 
\begin{equation*}
	H(\varphi(\widehat{\lambda}_p))=\widehat{H}(\varphi(\widehat{\lambda}_p)) \text{  and } H'(\varphi(\widehat{\lambda}_p))=\widehat{H}'(\varphi(\widehat{\lambda}_p)).
\end{equation*}
Plugging in (\ref{eq:intpointdisk}) gives us the optimal interpolation conditions in (\ref{eq:h2Doptimalconditions})
\begin{equation*}
	H\left(1/\widehat{\lambda}_p^*\right)=\widehat{H}\left(1/\widehat{\lambda}_p^*\right)\text{ and } H'\left(1/\widehat{\lambda}_p^*\right)=\widehat{H}'\left(1/\widehat{\lambda}_p^*\right).
\end{equation*}

It is worth mentioning that there exists a slight discrepancy between the above framework and the original $\mathcal{H}_2(\bar{\mathbb D}^{\mathsf{c}})$ formulation for discrete time systems. This is due to the restriction of the poles to be in $\mathbb{D}\backslash\{0\}$ such that we avoid the singularity of $\psi$ in (\ref{eq:mobiusD}). 

\subsubsection{Optimality conditions for the upper half complex plane}\label{sec:upperhalfplane}
We now study the case where the full order transfer function in (\ref{eq:tffom}) has poles on the upper half complex plane $\mathbb{C}_{\uparrow}=\left\{z\in\mathbb{C}\big\lvert \text{Im}(z)>0 \right\}$. In this case, we choose the conformal map $\psi(s)=-\mathrm{i}s$ with inverse $\psi^{-1}(s)=\mathrm{i}s$. The function is analytic in the entire complex plane and conformally maps $\mathbb{C_-}$ into $\mathbb{C}_{\uparrow}$ and $\mathbb{C_+}$ into $\mathbb{C}_{\downarrow}=\left\{z\in\mathbb{C}\big\lvert \text{Im}(z)<0 \right\}$. For this reason, Assumption \ref{algoirka} is satisfied. In addition, neither $\psi$ nor $\psi'$ have poles, they are both analytic in $\mathbb{C}$, and $\psi'$ is nonzero everywhere. Thanks to these properties we satisfy the assumptions of Corollary \ref{corollary:practicaltheorem}. By expanding the function 
\begin{equation}\label{eq:upperhalfplaneintpoints}
	\varphi(s) = \overline{\psi}(-\psi^{-1}(s))^*
	=  \overline{\psi}(-\mathrm{i}s)^* = s^*, 
\end{equation}
we then get the interpolation conditions
\begin{equation*}
	H\left(\widehat{\lambda}_p^*\right)=\widehat{H}\left(\widehat{\lambda}_p^*\right)\text{ and } H'\left(\widehat{\lambda}_p^*\right)=\widehat{H}'\left(\widehat{\lambda}_p^*\right).
\end{equation*}
In other words, the interpolation points mirror the poles of $\widehat{H}$ with respect to the real axis. This is to be expected as we are rotating the framework of IRKA by $\frac{\pi}{2}$.

\subsubsection{Optimality conditions for an ellipse}\label{sec:ellipse}

In this last example, we consider the Bernstein ellipse $\delta\mathbb{B}$, i.e., an  ellipse with foci at $1$ and $-1$ (see \cite[Chapter 8]{trefethen2019}). We refer to the interior of $\delta\mathbb{B}$ as $\mathbb{B}$. Letting $[-1,1]$ be a slit in the real axis, we assume the full order model to have poles in $\mathbb{B}\backslash[-1,1]$.
\begin{figure}
	\centering
	\includegraphics[width=0.9\textwidth]{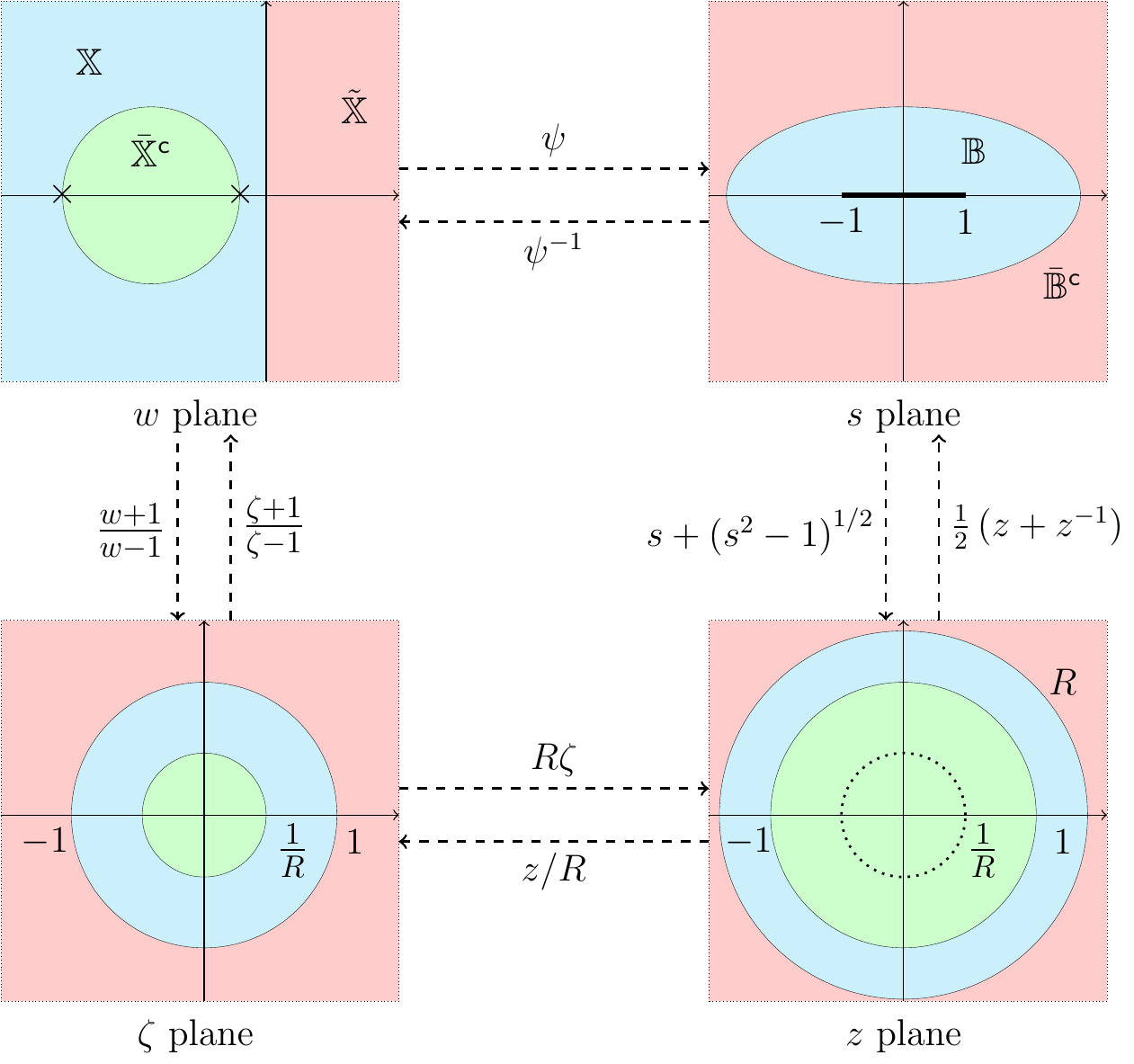}
	\caption{Mapping from the left half plane into the interior of a Bernstein ellipse with major and minor axis $(R+R^{-1})/2$ and $(R-R^{-1})/2$, respectively.}
	\label{fig:joukowski}
\end{figure}
In this case, the conformal map is designed following the process depicted in Figure \ref{fig:joukowski}. From the $w$ plane to the $s$ plane we use conformal mappings in the following order: M\"obius transformation, scaling by $R>1$, and the Joukowski transform \cite[Section 6]{kythe2019}. This leads to
\begin{equation}
	\begin{aligned}
		\psi(w) &= \frac{1}{2}\left(R\frac{w+1}{w-1}+R^{-1}\frac{w-1}{w+1}\right),\ \
		\psi^{-1}(s) = \frac{\left(s+(s^2-1)^{\frac{1}{2}}\right)R^{-1}+1}{\left(s+(s^2-1)^{\frac{1}{2}}\right)R^{-1}-1}.
	\end{aligned}
\label{eq:joukowskiextended}
\end{equation} 
Let us focus on the Joukowski transform and its inverse
\begin{equation}
	\begin{aligned}
		J(z) &= \frac{1}{2}\left(z+\frac{1}{z}\right),\quad 
		J^{-1}(s) = s\pm(s^2-1)^{\frac{1}{2}}.\\
	\end{aligned}
\label{eq:joukowski}
\end{equation}
This function maps both $\bar{\mathbb{D}}^{\mathsf{c}}$ and $\mathbb{D}\backslash\{0\}$ into $\mathbb{C}\backslash[-1,1]$. 
In particular, it maps circles of radius $R$ and $1/R$, with $R>1$, into Bernstein ellipses with major axis $(R+R^{-1})/2$ and minor axis $(R-R^{-1})/2$. Since we have $J(z)=J(1/z)$, the Joukowski transform is obviously not bijective and, as a remedy, we choose the positive root of $J^{-1}$ and the exterior of $\mathbb{D}$ as domain of $J$. 
In the $z$ plane of Figure \ref{fig:joukowski}, the domain of $J$ is represented as the blue torus and the red plane outside the disk of radius $R$. In particular, the blue torus is mapped into $\mathbb{B}\backslash[-1,1]$ and the red plane into $\bar{\mathbb{B}}^{\mathsf{c}}$. 

We now analyze the conformal map (\ref{eq:joukowskiextended}) in more detail. This meromorphic function in $\mathbb{C}$ has two poles in $1$ and $-1$. Its derivative
\begin{align*}
	\psi'(w) &= \frac{-R}{(w-1)^2} + \frac{R^{-1}}{(w+1)^2}
	= \frac{R^{-1}(w-1)^2-R(w+1)^2}{(w-1)^2(w+1)^2},
\end{align*}
exists everywhere in $\mathbb{C}\backslash\{-1,1\}$ and has two double poles in $1$ and $-1$. It presents two zeroes in $(\pm R^{-1}+1)/(\pm R^{-1}-1)$ and a double one at infinity. Since we choose $R>1$, the two zeroes are in the left half $w$ plane. In particular, these are marked with a cross in Figure \ref{fig:joukowski} along the boundary of $\bar{\mathbb{X}}^{\mathsf{c}}$. Function $\psi$ conformally and bijectively maps $\mathbb{X}$ into $\mathbb{B}\backslash[-1,1]$, and $\tilde{\mathbb{X}}=\mathbb{C}_+\backslash\{1\}$ into $\bar{\mathbb{B}}^{\mathsf{c}}$. We do not consider the disk in the $w$ plane given by $\bar{\mathbb{X}}^{\mathsf{c}}\backslash\tilde{\mathbb{X}}$ otherwise we would lose bijectivity. With these considerations we satisfy Assumption \ref{assumption:1}. In addition, we have that $\psi'(\cdot)^{\frac{1}{2}}$ and $\psi$ share the same poles, and $\psi'$ is zero in a finite amount of points in $\mathbb{C}\backslash\{-1,1\}$. As a consequence, (\ref{eq:joukowskiextended}) satisfies the assumptions of Corollary \ref{corollary:practicaltheorem}. The optimal interpolation points are then given by the function 
\begin{align*}
	\varphi(s) &= \overline{\psi}(-\psi^{-1}(s))^*= \overline{\psi}\left(-\frac{\left(s+(s^2-1)^{\frac{1}{2}}\right)R^{-1}+1}{\left(s+(s^2-1)^{\frac{1}{2}}\right)R^{-1}-1}\right)^*.
\end{align*}
Because $\overline{\psi}(\cdot)=\psi(\cdot)$ we have that
\begin{equation}\label{eq:ellipseintpoints}
	\begin{aligned}
		\varphi(s) 
		&= \psi\left(-\frac{\left(s+(s^2-1)^{\frac{1}{2}}\right)R^{-1}+1}{\left(s+(s^2-1)^{\frac{1}{2}}\right)R^{-1}-1}\right)^*\\
		&= \frac{1}{2}\left(\frac{R^2}{s+( 			 s^2-1)^{\frac{1}{2}}}+\frac{s+(s^2-1)^{\frac{1}{2}}}{R^2}\right)^*.
	\end{aligned}
\end{equation}
Using (\ref{eq:ellipseintpoints}) in (\ref{eq:interpolationcondH2Apractical}) we obtain optimality conditions for transfer functions with poles in $\mathbb{B}\backslash[-1,1]$.
In Section \ref{sec:numexp}, we will consider the conformal map  (\ref{eq:joukowskiextended}) with two minor modifications: 1) a translation of the ellipse by $c\in\mathbb{C}$ and 2) a scaling and rotation by $M\in\mathbb{C}$. The resulting conformal map becomes 
\begin{equation}\label{eq:joukowskiextendedcM}
	\begin{aligned}
		\psi(w) &= c+\frac{M}{2}\left(R\frac{w+1}{w-1}+R^{-1}\frac{w-1}{w+1}\right),
	\end{aligned}
\end{equation} 
with inverse
\begin{equation*}
	\begin{aligned}
		\psi^{-1}(s) &= \frac{\left((s-c)M^{-1}+\left(\left((s-c)M^{-1}\right)^2-1\right)^{\frac{1}{2}}\right)R^{-1}+1}{\left((s-c)M^{-1}+\left(\left((s-c)M^{-1}\right)^2-1\right)^{\frac{1}{2}}\right)R^{-1}-1}.
	\end{aligned}
\end{equation*} 
The interpolation points are then computed by
\begin{equation}\label{eq:ellipseintpointscM}
	\begin{aligned}
		\varphi(s) 
		&= c+\frac{M}{2}\left(\frac{R^2}{(s-c)M^{-1}+\left( 			 \left((s-c)M^{-1}\right)^2-1\right)^{\frac{1}{2}}}\right.\\
		&\quad\quad\quad\quad\quad \left.+\frac{(s-c)M^{-1}+\left(\left((s-c)M^{-1}\right)^2-1\right)^{\frac{1}{2}}}{R^2}\right)^*.
	\end{aligned}
\end{equation}

\subsection{IRKA with conformal maps}\label{sec:irkacom}

In view of the interpolation conditions from Corollary \ref{corollary:practicaltheorem}, it now seems natural to consider an iterative algorithm to solve (\ref{eq:H2AbarOptProb}) by modification of IRKA. In particular, instead of updating the interpolation points according to a reflection along the imaginary axis via $-\widehat{\lambda}_j^*$, we use $\varphi(\widehat{\lambda}_j)$ for $j=1,\dots,r$. This modified version of IRKA allows to reduce transfer functions with poles in general domains that are characterized by a specific set of conformal maps (see Corollary \ref{corollary:practical} and \ref{corollary:practicaltheorem}). In Algorithm \ref{algoirka}, we provide an appropriate pseudocode.

\begin{algorithm}
	\caption{Iterative Rational Krylov Algorithm with Conformal Maps}\label{algoirka}
	\begin{algorithmic}[1]
		\Require The full order system matrices $(\mathbf{A},\mathbf{b},\mathbf{c})$, the conformal map $\psi$, and the reduced order $r<n$.
		\State Make initial guess of interpolation points $\boldsymbol{\sigma}_0$
		\State Construct the projection matrices $\mathbf{V}_r$ and $\mathbf{W}_r$ as in (\ref{eq:projectionmatrices})
		\While{$\|\boldsymbol{\sigma}_{i+1}-\boldsymbol{\sigma}_{i}\|/\|\boldsymbol{\sigma}_{i}\|>\mathsf{tol}$}
		\State Assign $\widehat{\mathbf{A}}_r = (\mathbf{W}_r^*\mathbf{V}_r)^{-1}\mathbf{W}_r^*\mathbf{A}\mathbf{V}_r$
		\State Solve the eigenvalue problem $\widehat{\mathbf{A}}_r \mathbf{v}_j = \widehat{\lambda}_j\mathbf{v}_j$ and set $\boldsymbol{\sigma}_{i+1}^{(j)}=\varphi(\widehat{\lambda}_j)$ \newline for $j=1,\dots,r$
		\State Update the projection matrices such that \newline $\text{Ran}(\mathbf{V}_r)=\text{span}\left\{(\boldsymbol{\sigma}_{i+1}^{(1)}\mathbf{I}-\mathbf{A})^{-1}\mathbf{b},\dots, (\boldsymbol{\sigma}_{i+1}^{(r)}\mathbf{I}-\mathbf{A})^{-1}\mathbf{b} \right\}$,\newline
		$\text{Ran}(\mathbf{W}_r)=\textnormal{span}\left\{((\boldsymbol{\sigma}_{i+1}^{(1)})^*\mathbf{I}-\mathbf{A}^*)^{-1}\mathbf{c},\dots, ((\boldsymbol{\sigma}_{i+1}^{(r)})^*\mathbf{I}-\mathbf{A}^*)^{-1}\mathbf{c} \right\}$
		\EndWhile
		\State Construct the reduced order model matrices $\widehat{\mathbf{A}}_r,\widehat{\mathbf{b}}_r, \widehat{\mathbf{c}}_r$ using (\ref{eq:systemmatrices}).
	\end{algorithmic}
\end{algorithm}

Let us emphasize that the optimal $\mathcal{H}_2(\bar{\mathbb{A}}^{\mathsf{c}})$ model reduction problem aims at minimizing the error $\|\mathfrak{H}_H-\mathfrak{H}_{\tilde{H}}\|_{\mathcal{H}_2(\mathbb{C}_+)}$ with respect to $\tilde{H}$. In particular, the conformal map $\psi$ might cause $\mathfrak{H}_H$ to have poles very close to the imaginary axis resulting in a potentially poor convergence behavior. An appropriate choice of $\psi$ might therefore require an individual analysis of the problem at hand. In the next section, we will report on such issues and also compare Algorithm \ref{algoirka} against the classical version of IRKA.

\section{Numerical experiments}\label{sec:numexp}

In this section, we test our theoretical results with two numerical examples that show the effectiveness of Algorithm \ref{algoirka} applied to systems that are not asymptotically stable.
The two cases considered are the Schr\"odinger and the undamped wave equation. Our main purpose is to show that Algorithm \ref{algoirka} is able to effectively reduce systems with poles along the imaginary axis, a case where the $\mathcal{H}_2(\mathbb{C_+})$ framework would fail. In the first example, we compare the results of Algorithm \ref{algoirka} against IRKA. In the second experiment, we apply a more complex conformal map, developed in Section \ref{sec:ellipse}, and show the performance of the resulting reduced model.

All simulations were generated with \textsc{MATLAB}\textsuperscript{\textregistered} 2023b on a laptop computer equipped with an Apple Silicon\textsuperscript{\textregistered} M2 Pro processor and 16GB of RAM. The $\mathcal{H}_2(\bar{\mathbb{A}}^\mathsf{c})$ error norms are computed through the \texttt{integral} command while the trajectories are a result of the \texttt{ode23} routine. For both the \texttt{integral} and \texttt{ode23} functions we used  relative and absolute tolerances of $10^{-8}$ and $10^{-12}$, respectively. The implementation is also publicly available\footnote{\url{https://github.com/aaborghi/H2-arbitrary-domains}}.

\subsection{Schr\"odinger equation}

In the first example, we consider the following boundary controlled Schr\"odinger equation (see, e.g., \cite[Example 6.7.3, Section 11.6.1]{TucW09})
\begin{equation*}
	\begin{aligned}
		\frac{\partial w(x,t)}{\partial t}&=-\mathrm{i} \frac{\partial^2w(x,t)}{\partial x^2}, && \text{on } (0,1)\times(0,T), \\
		w(0,t)&=0, \ \ w(1,t)=u(t), && \text{on } (0,T), \\
		y(t) &= \int_0^1 w(x,t)\,\mathrm{d}x, && \text{on } (0,T),\\
		w(x,0)&=0, && \text{in } (0,1),
	\end{aligned}
\end{equation*}
where $u$ and $y$ are the (scalar) input and output of the system.
We use a spatial semi discretization by centered finite differences resulting in a full order system of dimension $n=1000$. As this system has its poles on the upper part of the imaginary axis, we apply the following conformal map from Section \ref{sec:upperhalfplane} which rotates (clockwise) the left half plane by $\frac{\pi}{2}$
\begin{equation*}
	\psi(s) = -\mathrm{i}s.
\end{equation*}
This leads to the function in (\ref{eq:upperhalfplaneintpoints}) for computing the interpolation points. The initial shifts $\boldsymbol{\sigma}_0$ for Algorithm \ref{algoirka} are computed in \textsc{MATLAB} as follows
\[
\boldsymbol{\mu} = 500\;\texttt{randn}(r/2,1)-1000\mathrm{i}\;\texttt{rand}(r/2,1), \quad \boldsymbol{\sigma}_0 = \begin{bmatrix}
    \boldsymbol{\mu} \\ -\overline{\boldsymbol{\mu}}
\end{bmatrix}.
\]
For testing IRKA we chose the initial shifts equal to $\mathrm{i}\boldsymbol{\sigma}_0$. In Figure \ref{fig:schupperplane}, we depict the relative $\mathcal{H}_2(\bar{\mathbb{A}}^{\mathsf{c}})$ error defined as 
\begin{equation*}
	\frac{\|H-\widehat{H}\|_{\mathcal{H}_2(\bar{\mathbb{A}}^{\mathsf{c}})}}{\|H\|_{\mathcal{H}_2(\bar{\mathbb{A}}^{\mathsf{c}})}},
\end{equation*}
with $\bar{\mathbb{A}}^{\mathsf{c}}=\mathbb{C}_\downarrow$, see Section \ref{sec:H2Doptimalconditions}, and reduced orders varying from $r=4$ to $r=24$. Here, as expected, Algorithm \ref{algoirka} clearly outperforms IRKA with regard to the $\mathcal{H}_2(\bar{\mathbb{A}}^{\mathsf{c}})$ error. This is not surprising since, due to the poles of the system being located on the upper part of the imaginary axis, the system is not in $\mathcal{H}_2(\mathbb{C}_+)$. We also mention that, throughout the range of reduced orders $r$ in Figure \ref{fig:schupperplane}, Algorithm \ref{algoirka} converges with fewer iterations.  
Since Algorithm \ref{algoirka} first of all tries to ensure that the resulting reduced model has its poles also in the open upper half plane, the reduced poles generally cannot be expected to remain exactly on the imaginary axis, therefore resulting in possibly unstable systems. In Figure \ref{fig:schAlg1gaussinput} the trajectories of the full order model (FOM) and the resulting reduced order model (ROM) with $r=16$ are considered for a specific Gaussian input $u$ depicted in the bottom plot. We see that the output of the reduced model $\widehat{y}_r$ almost exactly replicates $y$ with a low relative error.
\begin{figure}
	\centering
	\includegraphics[width=0.55\textwidth]{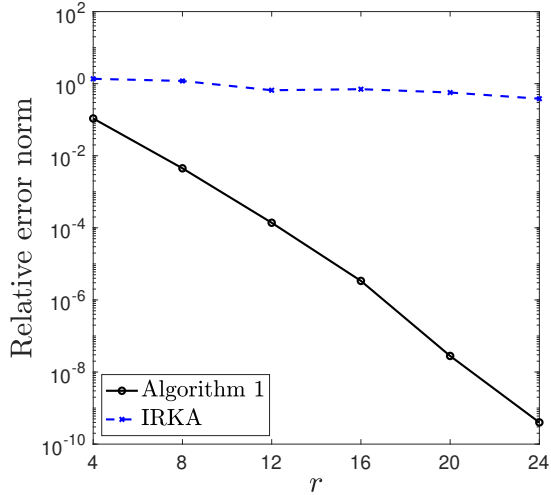}
	\caption{The $\mathcal{H}_2(\bar{\mathbb{A}}^{\mathsf{c}})$ relative error of Algorithm \ref{algoirka} and IRKA for different reduced orders $r$ and $n=1000$. }
	\label{fig:schupperplane}
\end{figure}
Figure \ref{fig:schAlg1gaussinput} also shows how the trajectories of a ROM computed by IRKA, with $r=16$, behaves under the same input as above. 
Looking at the relative errors, we see that IRKA outperforms Algorithm \ref{algoirka} for times where $u(t)\approx 0$. On the other hand, the model generated by IRKA appears to be less accurate when the control input becomes active.
It is important to note that, within the considered class of systems, the computation of the reduced model through IRKA is sensitive w.r.t. the selection of the initial shifts. 
Indeed, when varying the initial shifts we observed significant differences in the IRKA reduced models, ranging from highly unstable systems to very accurate stable systems that outperform Algorithm \ref{algoirka}. 
In any case, there is no theoretical foundation justifying that the model produced by IRKA is optimal in a specific sense as the full order systems are not in the $\mathcal{H}_2(\mathbb{C}_+)$ space. In addition, it is important to highlight that, due to the interpolation conditions, the shifts are set equal to the mirror image of the reduced model poles at each iteration. Since the Schr\"odinger equation has its spectrum on the imaginary axis, for larger reduced system dimensions, IRKA is likely to become ill-conditioned due to (numerical) singularities in the required rational Krylov subspaces.

\begin{figure}
    \centering
    \includegraphics[width=\textwidth]{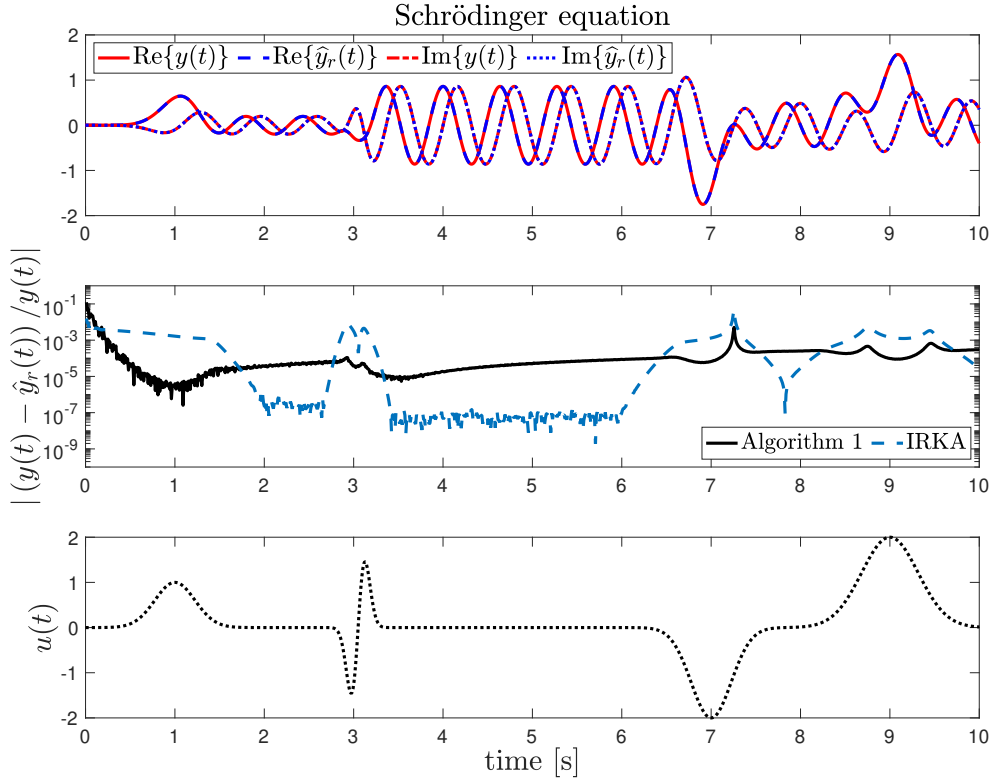}
    \caption{(Top) the real and imaginary output responses of the FOM ($y$) and ROM ($\widehat{y}_r$) for the chosen input $u$. Here the ROM system matrices are computed with Algorithm 1 for $r=16$. (Middle) the relative error of the reduced model computed with Algorithm 1 and IRKA. (Bottom) trajectory of the chosen control input $u$.}
    \label{fig:schAlg1gaussinput}
\end{figure}

\subsection{Wave equation} 

As a second example, we consider the linear undamped wave equation subject to distributed control and observation given by
\begin{equation}\label{eq:wave}
	\begin{aligned}
	\frac{\partial^2w(x,t)}{\partial t^2}&= \frac{\partial^2w(x,t)}{\partial x^2}+\chi_{[0.6,0.7]}u(t), && \text{on } (0,1)\times(0,T), \\
		w(0,t)&=0, \ \ w(1,t)=0, && \text{on } (0,T), \\
		y(t) &= \int_{0.1}^{0.4} w(x,t)\,\mathrm{d}x, && \text{on } (0,T),\\
		w(x,0)&=0, && \text{in } (0,1),
	\end{aligned}
\end{equation}
where $\chi_{[0.6,0.7]}$ denotes the indicator function on the interval $I=[0.6,0.7]$. Again, we employ a finite difference discretization with 5000 inner grid points leading to a first order ODE system of dimension $n=10000$. Here, the poles are located on the imaginary axis but they are now symmetrically distributed according to the real axis. For this example, we choose the conformal map described in Section \ref{sec:ellipse}. We utilize (\ref{eq:joukowskiextendedcM}) where we include a translation by $c\in\mathbb{C}$, and scaling and rotation by $M\in\mathbb{C}$. To include all the FOM poles we choose $c=-5\times10^{-3}$ and $M=1.5\times10^{4}\mathrm{i}$. 
To restrict the poles of the reduced model on the imaginary axis we choose $R=1+1\times10^{-6}$. This makes the minor axis $(R-R^{-1})/2$ approach 0 and so constraining Algorithm \ref{algoirka} to position the poles on the imaginary axis. However, because the poles of the FOM transfer function $H$ will be close to the boundary of $\mathbb{A}$, i.e., the ellipse, the poles of $\mathfrak{H}_H$ will get closer to the imaginary axis. This can lead to some numerical issues in the construction of the reduced model and the computation of the $\mathcal{H}_2(\bar{\mathbb{A}}^\mathsf{c})$ norm. 
The interpolation points are chosen according to (\ref{eq:ellipseintpointscM}) in each iteration of Algorithm \ref{algoirka}. Here, the initial shifts are taken with fixed real part at 0.1 and a scaled normally distributed random choice of the imaginary part. 
It is worth mentioning that the choice of parameters in (\ref{eq:joukowskiextendedcM}), and eventually in (\ref{eq:ellipseintpointscM}), strictly depends on the position of the system poles. This requires the user to have some knowledge regarding the location of the spectrum for the correct use of the conformal map. Figure \ref{fig:wave} shows the impulse response of the FOM and a reduced model of order $r=20$.
We see that the two trajectories almost match with low absolute error.

Even if in this example Algorithm \ref{algoirka} shows potentially good performance for low frequencies, it must be pointed out that this is dependent on the choice of the initial shifts. As mentioned above, the boundary of the ellipse is mapped into the imaginary axis by $\psi^{-1}$. Having the poles of the FOM very close to the boundary of the ellipse makes the computation of the reduced system more sensitive to the choice of the initial shifts. Nevertheless, with this approach, we can restrict the poles of the ROM to be almost on the imaginary axis.

\begin{figure}
	\centering
	\includegraphics[width=1\textwidth]{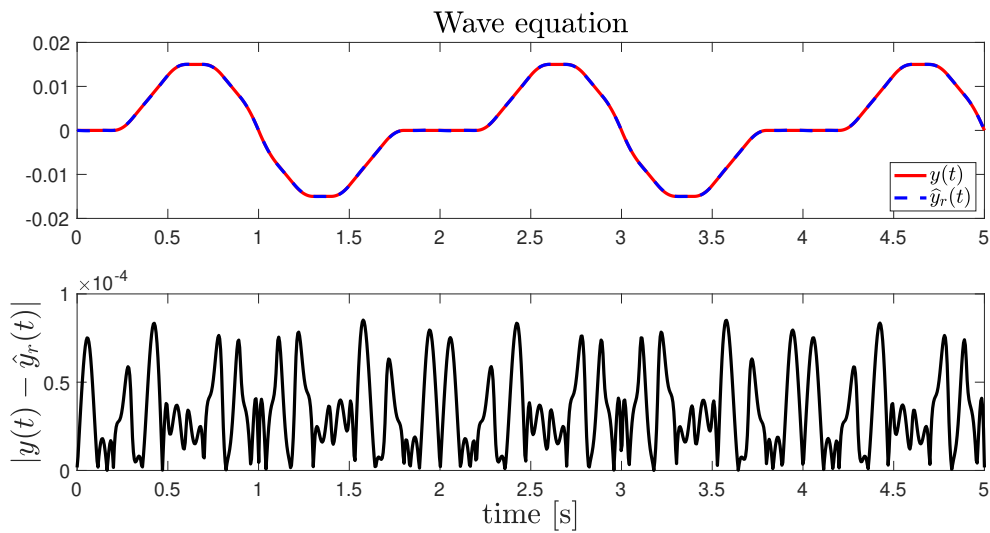}
	\caption{(Top) the output impulse response of the FOM ($y$) and ROM ($\widehat{y}_r$). Here the system dimension is $n=10000$ and the reduced order is $r=20$. (Bottom) the absolute error.}
	\label{fig:wave}
\end{figure}

\section{Conclusions}
In this paper, we introduced a novel $\mathcal{H}_2$ optimal model reduction framework that can treat transfer functions with poles in general domains. For this purpose, we used conformal maps to define the $\mathcal{H}_2(\bar{\mathbb{A}}^{\mathsf{c}})$ space and derived first order optimality conditions. 
With some additional assumptions, we retrieved simplified optimal interpolation conditions which we used to develop a modified version of IRKA. 

\paragraph{Acknowledgments}
We gratefully acknowledge the support of the Deutsche \\Forschungsgemeinschaft (DFG) as part of GRK2433 DAEDALUS (Project number 384950143). We would like to thank Olivier S\`ete, Jan Zur, and Mathias Oster for their insightful and helpful discussions.

\bibliographystyle{siamplain}
\bibliography{references}
\end{document}